\numberwithin{equation}{section}
\newcommand{\proofend}{\hfill \hbox{\vrule width 5pt height 5pt depth
0pt}}
\newcommand{\R}{\mathbb{R}}
\newcommand{\C}{\mathbb{C}}
\newcommand{\Z}{\mathbb{Z}}
\newcommand{\N}{\mathbb{N}}
\newcommand{\Q}{\mathbb{Q}}
\newcommand{\proj}{\mathbb{P}}
\newtheorem{thm}{Theorem}
\newtheorem{lemma}{Lemma}[section]
\newtheorem{corol}[thm]{Corollary}
\newtheorem{propo}[lemma]{Proposition}
\begin{document}

\title[Convergence and periodic points]{Convergence to the Mahler measure and the distribution of periodic points for  algebraic Noetherian $\Z^d$-actions}
\author{Vesselin Dimitrov}
\address{Yale University Math. Dept. \\ 10 Hillhouse Avenue \\ CT~06520--8283 }
\email{vesselin.dimitrov@yale.edu}

\begin{abstract}
We prove a sub-Liouville bound, up to a uniformly bounded exceptional set, on the distance from an $N$-torsion point of $\mathbb{G}_{m/\Q}^d$ to an algebraic subset, under a fixed Archimedean place $\bar{\Q} \hookrightarrow \C$. As a consequence, for all non-zero integer Laurent polynomials $P$ in $d$ commuting variables, we prove that the averages of $\log{|P(\mathbf{x})|}$ over $\boldsymbol{\zeta} \in \mu_N^d$, $P(\boldsymbol{\zeta}) \neq 0$, converge as $N \to \infty$ to the Mahler measure of~$P$.

By the work of  B. Kitchens, D. Lind, K. Schmidt and T. Ward, this convergence consequence amounts to the following statement in dynamics: For every Noetherian $\Z^d$-action $T : \Z^d \to \mathrm{Aut}(X)$ by automorphisms of a compact abelian group $X$ having a finite topological entropy $h(T)$, the annihilator $\mathrm{Per}_N(T)$ of $N \cdot \Z^d$ has $e^{(1+o(1))h(T)N^d}$ connected components,  as $N \to \infty$. Moreover, it follows that all weak-$*$ limit measures of the push-forwards of the Haar measures on $\mathrm{Per}_N(T)$, under any a sequence of positive integers $N$, are  measures of maximum entropy $h(T)$.

Combined with  work of Thang Le, this solves the abelian case of the problem of  the asymptotic growth of torsion in the homology of congruence covers  of a fixed finite simplicial complex.
 We also give an indication on how an alternative, completely different route to the convergence result and its consequences is also possible through Habegger's recent work on Diophantine approximation to definable sets.
 \end{abstract}

\maketitle

\newpage

\section{Introduction} \label{intro}

 Ergodic and dynamical systems theory have been marked over the past three decades by an extensive interaction with   central problems of classical Diophantine analysis. This continues a much older  tradition begun by Hermann Weyl in a 1914 paper~\cite{weyl} that he  conceived as ``an application of number theory'' to Boltzmann's ergodic hypothesis ({\it Sur une application de la th\'eorie des nombres \`a la m\'ecanique statistique et la th\'eorie des perturbations}). In more recent times, which have witnessed ergodic methods penetrating deeply into all aspects of number theory, the direction of implications is usually reversed, and another line of inquiry, emphasizing higher rank and rigidity phenomena with their relation to Diophantine approximations, was started with Furstenberg's influential 1966 paper~\cite{furstenberg}, where the famous $\times 2, \times 3$ problem was formulated. Our present paper owes something to both  traditions, by using number theory to verify a hypothesis on the distribution of periodic trajectories  for a class of higher rank dynamical systems that contains Furstenberg's $\times 2, \times 3$ system as a very special case: the Noetherian $\Z^d$-actions by automorphisms of a compact abelian group. In the rank one case this problem was solved long ago, by Lind in~\cite{lindqh}. Our result addresses the higher rank situation, where the number theory is less understood, and could, conceivably,  turn out to behave differently.

 A superb introduction to ergodic ideas in number theory is in the book~\cite{einsidlerward} by Einsiedler and Ward. The reader will find in our final section~\ref{appo} an overview of
  Diophantine/dynamical pairs of mathematically equivalent problems, placing our result in a broad yet, we hope, sufficiently focused context.

In this paper, we treat  a pair of equivalent questions about \emph{algebraic $\Z^d$-actions}: the $\Z^d$-actions $T : \Z^d \to \mathrm{Aut}(X)$ by automorphisms of a compact  group $X$. This addresses the problem left open in T. Ward's thesis~\cite{ward} and then in K. Schmidt's book~\cite{schmidt}, and the papers~\cite{previous,atoral} of D. Lind, K. Schmidt and E. Verbitskiy:  when $X$ is abelian, and under an appropriate --- and necessary --- Noetherianness assumption (the descending chain condition on closed invariant subgroups), \emph{does the exponential growth rate of the size of the component group of the subgroup $\mathrm{Per}_{\Gamma}(T) \subset X$ of $\Gamma$-periodic points, indexed by an order sublattice $\Gamma \subset \Z^d$, converge to the topological entropy of the system, assuming the latter is finite?} We answer this in the affirmative in the symmetric case $\Gamma = N \cdot \Z^d$ of the cubical sublattices. On  applying another paper~\cite{wardexp} of Ward, this furthermore proves  that, when $(X,T)$ has a completely positive entropy,
  the subgroups $\mathrm{Per}_N(T) :=  \mathrm{Per}_{N \cdot \Z^d}(T)\subset X$ become equidistributed in the Haar measure of the group $X$, as $N \to \infty$. {\it Completely positive (finite) entropy} is the natural condition as it means equivalently that the Haar measure is the unique probability measure of maximum entropy: this is Berg's theorem, proved in this situation by Lind, Schmidt and Ward~\cite{lindschmidtward}. In the general case, when $(X,T)$ supports more than one maximum entropy measure, the conclusion is that all  weak-$*$ limits over some such sequence of levels $N$ are measures of maximum entropy.

   As  we indicate in sections~\ref{statement} and~\ref{hlawka} below,  an entirely different solution, coming again from the number theory side, is implicit in Habegger's very recent manuscript~\cite{habegger}. Habegger does not note the dynamical connection, but the corresponding convergence problem follows easily from his general estimate on the number of rational points lying very near to a subset of $\R^d$ definable in a polynomially bounded $o$-minimal expansion of the real numbers, in conjunction with the Ax and Koksma-Hlawka theorems.   Habegger's and our Diophantine results are quite different, and generalize in completely disjoint directions.
    Neither of the two appears capable of addressing the growth rate and equidistribution of the general groups $\mathrm{Per}_{\Gamma}(T) \subset \mathrm{Per}_{\exp(\Gamma)}(T)$, and even less the equidistribution of individual periodic trajectories in $\mathrm{Per}_N(T)$ that are ``long enough'' with respect to a certain other characteristic of the orbit, that we shall not discuss in this paper, which following Einsiedler, Lindenstrauss, Michel and Venkatesh~\cite{individual} could be called the ``discriminant'' (or ``denominator'') of the orbit; see also chapter~21 of McMullen~\cite{ctm} for a discussion of the basic case of the doubling map of the circle. On the other hand, as the groups $\mathrm{Per}_N(T)$ considered exhaust the periodic points of the $\Z^d$-action, our result can be seen as an averaged form of the growth and equidistribution of periodic trajectories of an algebraic $\Z^d$-action.

 In the case that presently concerns us, the substance of the problem lies in the systems that exhibit only a partially hyperbolic behavior; the case of expansive $\Z^d$-actions has no Diophantine content, and was settled already in Ward~\cite{ward,wardexp}.
The Diophantine problem was recognized by Lind in~\cite{lindqh} to emerge from the dual description (or standard model) of an algebraic $\Z^d$-action, by means of commutative algebra. A $\Z^d$-action $T : \Z^d \to \mathrm{Aut}(X)$ by automorphisms of a compact abelian group $X$ is given by a module $\mathfrak{M} := \widehat{X}$ over the Laurent series ring $R_d := \Z[\Z^d] \cong \Z[x_1^{\pm 1}, \ldots, x_d^{\pm 1}]$. The  descending chain condition mentioned amounts precisely to the Noetherianness condition of the $R_d$-module $\mathfrak{M}$. When this is the case (but not without the condition, nor when the group $X$ is non-abelian), Kitchens and Schmidt (see \cite{kitchensschmidt}, Cor.~4.8) proved that the periodic points of $T$ are dense. One then expects the situation to be similar to the basic case of a toral automorphism, where the dynamics is given by an invertible integer matrix $A \in \mathrm{GL}(m,\Z)$; its entropy $h$ equals the Mahler measure $m(P) = \int_{S^1} \log{|P(z)|} \, d\theta = \sum_{i=1}^m \log^+{|\alpha_i|}$ of the companion polynomial $P(T) = \det( T \cdot I_m - A) = \prod_{i=1}^m (T - \alpha_i)$; the number of points with finite period $N$ equals $P_N := |\det(A^N - I_m)| = \prod_{i=1}^m |\alpha_i^N - 1|$; and the convergence $\lim_{N \to \infty} \frac{1}{N} \log{P_N} = h$ amounts precisely to A. O. Gelfond's theorem that an algebraic integer $\alpha$ of unit modulus $|\alpha| = 1$ may not be exponentially approached by an $N$-th root of unity $\zeta_N \neq \alpha$: $-\log{|\alpha - \zeta_N|} = o(N)$, as $N \to \infty$. The extension of this to solenoids ($A \in \mathrm{GL}(m,\Q)$) covers the case of an arbitrary algebraic $\alpha$ of unit modulus. For a lucid and leisurely  treatment of the $d=1$ case, including Yuzvinskii's calculation of the entropy, we refer the reader to Everest and Ward's book~\cite{everestward}, in addition to Lind's original paper~\cite{lindqh}.

That the general situation is indeed similar, only much more complicated,  emerged from the work of Kitchens and Schmidt~\cite{kitchensschmidt}, Ward~\cite{ward} and Lind, Schmidt and Ward~\cite{lindschmidtward}. By an addition formula for the entropy, due to Yuzvinskii and Thomas in the rank one case, a \emph{devissage} reduces the problem to the case that $\mathfrak{M} = R_d/I$, where $I$ is an ideal of the Laurent ring: the case of a cyclic action. Then the  entropy of $T$ turns out to be zero unless the ideal $I = (P)$ is principal, in which case it equals the Mahler measure $h(T) = m(P) := \int_{(S^1)^d} \log{|P(\mathbf{z})|} \, d\boldsymbol{\theta}$ (if $P \neq 0$, and infinity if $P = 0$). In such a way, Lind, Schmidt and Ward (see also the introduction in Lind, Schmidt and Verbitskiy~\cite{atoral}) obtain the equivalence of the following  dynamical and  Diophantine statements. With $\Gamma$ ranging over all finite index subgroups of $\Z^d$, let $\langle \Gamma \rangle := \min\{ \| \mathbf{m}\| \mid  \mathbf{0} \neq \mathbf{m} \in \Gamma \}, \, \|\mathbf{m}\| := \max(|m_1|,\ldots,|m_d|)$, and denote by $P_{\Gamma}(T)$ the number of connected components of the group of $\Gamma$-periodic points for $T$. Let $h(T)$ be the topological entropy of the system $(X,T)$, which coincides with the metric entropy for the Haar measure. On the dynamical side we expect:

\begin{quote}
(A) \hspace{1cm} {\it Assume $T$ has finite topological entropy and satisfies the d.c.c.: every descending chain of closed invariant subgroups of $X$ is stationary. Then,
$$
\lim_{\Gamma: \, \langle \Gamma \rangle \to \infty} \frac{1}{[\Z^d:\Gamma]} \log{P_{\Gamma}(T)} = h(T).
$$}
\end{quote}

 In general, as explained in~\cite{lindschmidtward} on page~619, the Noetherian and finite entropy conditions are certainly necessary in such a statement; nothing could be said for non-Noetherian systems, where the growth rate need not converge and any rate between $0$ and $h(T)$ may occur (see, however, Baier, Jaidee, Stevens and Ward~\cite{exotic}). Under the Noetherian and finite entropy conditions, Schmidt proves in section~21 of~\cite{schmidt} a weaker statement identifying $h(T)$ with the limit \emph{supremum} over sublattices $\Gamma \subset \Z^d, \langle \Gamma \rangle \to \infty$. He first proves the upper bound in a direct application of the topological definition of $h(T)$, and then uses Gelfond's theorem and the formula for the entropy to exhibit a particular sequence $\{ \Gamma_i \}$  along which $\frac{1}{[\Z^d:\Gamma_i]} \log{|P_{\Gamma_i}|} \to h$. This sequence is very special; in particular, the finite subgroup $\mathbb{G}_m^d[\Gamma_i]$ is of the form $\mu_{a_1} \times \cdots \times \mu_{a_d}$, where $a_d \to \infty$ with respect to $a_1,\ldots,a_{d-1}$; see section~\ref{baker} for more on this.

The general problem (A) thus amounts to establishing a \emph{lower} bound on the growth of periodic points, and as already mentioned, the same applies to equidistribution. Similarly to the Brauer-Siegel theorem and to questions on orbit growth in the arithmetic dynamics of rational maps (for the latter, see Silverman~\cite{domrat,silvab}; Kawaguchi-Silverman~\cite{kawsil1,kawsil2}; Dimitrov~\cite{vesselin,vesselinadd}), the upper bounds come easily, and the question of the lower bound proves to be a subtle problem related intrinsically to Diophantine Approximations.

In the case at hand, the cited work of Kitchens, Schmidt, Lind and Ward renders (A) precisely equivalent to the following multidimensional extension of Gelfond's result. Note that as $\langle \Gamma \rangle \to
\infty$, the finite group $\mathbb{G}_m^d[\Gamma] := \{\boldsymbol{\zeta} \mid \boldsymbol{\zeta}^{\Gamma} = 1\}$ is equidistributed in the Haar measure $d\boldsymbol{\theta}$ of the torus $(S^1)^d$.

\begin{quote}
(B) \hspace{1cm} {\it Let $P \in \Z[x_1^{\pm 1}, \ldots, x_d^{\pm 1}] \setminus \{0\}$ be a non-zero integer Laurent polynomial. Then
$$
\lim_{\Gamma: \, \langle \Gamma \rangle \to \infty} \frac{1}{|\mathbb{G}_m^d[\Gamma]|} \sum_{\substack{\boldsymbol{\zeta} \in \mathbb{G}_m^d[\Gamma] \\ P(\boldsymbol{\zeta}) \neq 0}}  \log{|P(\boldsymbol{\zeta})|} = \int_{(S^1)^d} \log{|P(\mathbf{z})|} \, d\boldsymbol{\theta}.
$$
}
\end{quote}

This is obvious in the case that the hypersurface $\{P = 0\} \subset \mathbb{G}_m^d(\C)$ has empty intersection with the real torus $(S^1)^d$. From the dynamical point of view, this happens precisely when the cyclic $\Z^d$-action dual to the $R_d$-module $R_d / (P)$ is expansive.
By the alluded {\it devissage} procedure, Lind, Schmidt and
 Ward were thus able to prove (A) for all expansive systems $(X,T)$. We note that, for expansive systems, both  the finiteness of the entropy and the Noetherianness conditions are automatic: see Kitchens and Schmidt~\cite{kitchensschmidt}, Th.~5.2.

Further progress was made by Lind, Schmidt and Verbitskiy in~\cite{previous}, who gave two proofs of (B) in the case that the zero locus $\{P = 0\}$ intersects the torus $(S^1)^d$ in a finite set.  The first  is Diophantine, and consists of the observation that since the finitely many intersection points are necessarily algebraic, an application of Gelfond's (one-dimensional) theorem suffices to cover that case. More significant is their second proof, purely dynamical, by means of a construction of rapidly decaying homoclinic points.
In a subsequent paper~\cite{atoral}, the  same authors then extended their dynamical method to prove (B) in the ``generic'' case that the intersection locus $\{P = 0\} \cap (S^1)^d$ has real codimension at least two in the torus. The dynamical equivalent of this ``atoral'' hypothesis turns out to be precisely the existence of summable homoclinic points, and that is what allowed the authors to bypass the delicate Diophantine issues about torsion points getting extremely close to the zero locus $\{P = 0\}$.

In the present paper, we solve (B) for all Laurent polynomials $P$ in the case that $\Gamma$ runs over the cubical sublattices $N \cdot \Z^d$, so that the average is taken over all $d$-tuples of $N$-th roots of unity. It follows from the work of Kitchens, Lind, Schmidt and Ward cited above that (A) is true when all the $N \cdot \Z^d$-periodic points are taken together.

\begin{thm} \label{convergence}
  The hypotheses $(A)$ and $(B)$ are true for $\Gamma = N \cdot \Z^d$, $N \to \infty$.
\end{thm}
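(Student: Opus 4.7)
The plan is to establish the Diophantine statement~(B) for the cubical case $\Gamma = N\Z^d$; the dynamical statement~(A) then follows by the Kitchens--Schmidt--Lind--Ward equivalence already recorded in the excerpt. Fix a nonzero Laurent polynomial $P \in \Z[x_1^{\pm 1},\dots,x_d^{\pm 1}]$ and set $V := \{P = 0\} \cap (S^1)^d$. I would first dispose of the easy half: since $\log|P|$ is upper semicontinuous on $(S^1)^d$ and bounded above by $\log^+\|P\|_\infty$, approximation from above by continuous functions, combined with Weyl equidistribution of $\mu_N^d$, at once yields
\[
\limsup_{N \to \infty} \frac{1}{N^d} \sum_{\substack{\boldsymbol{\zeta} \in \mu_N^d \\ P(\boldsymbol{\zeta}) \neq 0}} \log|P(\boldsymbol{\zeta})| \;\leq\; m(P).
\]
The entire substance is the matching lower bound, which by a standard truncation argument amounts to uniform integrability of $\log^-|P|$ against the empirical measures on $\mu_N^d$.

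For fixed $\varepsilon > 0$, I would split the sum into its ``far'' and ``near'' parts according to whether $\mathrm{dist}(\boldsymbol{\zeta}, V) \geq \varepsilon$. Weyl applied to the continuous, bounded function $\log|P| \cdot \mathbf{1}_{(S^1)^d \setminus U_\varepsilon}$ makes the far average converge as $N \to \infty$ to $\int_{(S^1)^d \setminus U_\varepsilon} \log|P| \, d\boldsymbol{\theta}$, which by Jensen-integrability of $\log|P|$ and dominated convergence tends to $m(P)$ as $\varepsilon \to 0$. The remaining task is to show that the near contribution is uniformly small in $N$ and vanishes as $\varepsilon \to 0$.

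Here the sub-Liouville bound of the abstract enters: up to an exceptional set $E_N \subset \mu_N^d$ of cardinality $\leq C_\eta$ bounded independently of $N$, one has $\mathrm{dist}(\boldsymbol{\zeta}, V) \geq e^{-\eta N}$ for every $\eta > 0$, considerably sharper than the naive Liouville inequality $\mathrm{dist} \geq e^{-CN}$. Combined with a \L{}ojasiewicz inequality $|P(\mathbf{z})| \gtrsim \mathrm{dist}(\mathbf{z}, V)^k$ in a neighbourhood of $V$, this yields $\bigl|\log|P(\boldsymbol{\zeta})|\bigr| \leq k\eta N + O(1)$ uniformly off $E_N$; the exceptional set contributes $O(C_\eta N\log N/N^d) = o(1)$ for $d \geq 2$. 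For the remaining near sum I would use a dyadic shell decomposition of $U_\varepsilon \setminus U_{e^{-\eta N}}$ combined with a quantitative Erd\H{o}s--Tur\'an--Koksma discrepancy bound for $\mu_N^d$ applied shell-by-shell; on carefully interlocking $\eta$, $\varepsilon$, the \L{}ojasiewicz exponent $k$ and the volume decay $\mathrm{vol}(U_t) \lesssim t^a$, the total near contribution is bounded by $f(\varepsilon) \to 0$ independently of $N$. The case $d=1$ is Gelfond's classical estimate, recovered by the same scheme.

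The main obstacle is the sub-Liouville bound itself. The naive Liouville inequality applied to $\boldsymbol{\zeta}$ of algebraic degree $O(N)$ and height $0$ yields only $\mathrm{dist}(\boldsymbol{\zeta},V) \geq e^{-CN}$ for a fixed $C$ depending on $V$. Improving this exponent from a fixed $C$ to an arbitrarily small $\eta$, at the cost of only uniformly bounded exceptions, requires a genuinely Diophantine argument exploiting the special arithmetic of torsion: the Galois equivariance of $\mathrm{Gal}(\overline{\Q}/\Q)$ acting on $\mu_N^d$, the integrality of cyclotomic resultants at each $\boldsymbol{\zeta}$, and effective lower bounds (in the spirit of Bogomolov--Zhang and Amoroso--David) for the essential minimum of a subvariety of $\mathbb{G}_m^d$ that contains no torsion coset. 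That is where I expect the bulk of the technical work to go.
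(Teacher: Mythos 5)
Your overall scheme for deriving (B) — quantitative equidistribution of $\mu_N^d$, a truncation at level $\varepsilon N$, a count of $N$-torsion points in the tubular neighbourhood of $\{P=0\}$, the sub-Liouville bound off a bounded exceptional set, and the trivial Liouville estimate at the few remaining points — is essentially the same as the paper's short proof, which runs Koksma--Hlawka on the truncated function $f_T(\mathbf{z})=\max(-T,\log|P(\mathbf{z})|)$ with $T=\varepsilon N$ and bounds $|U|\ll_{d,P}N^{d-1}$ by a volume argument. Your dyadic-shell variant and your detour through $\mathrm{dist}(\boldsymbol{\zeta},V)$ and a \L ojasiewicz inequality are unnecessary but not wrong; the paper avoids them by truncating $\log|P|$ directly.

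There are, however, two genuine gaps in the derivation. First, Theorem~\ref{main} has \emph{two} exceptional sets: the bounded set $E_N$ which you treat, and a fixed finite union $Z$ of proper torsion cosets $\boldsymbol{\mu}T\subsetneq\mathbb{G}_m^d$ (depending on $P,\varepsilon$ but not on $N$) on which no sub-Liouville estimate is claimed. You never address $Z$. The paper disposes of it by noting $|U\cap Z|\ll_{d,P,\varepsilon}N^{d-2}$ (since $Z$ has codimension $\geq 1$ and one is also inside a $1/N$-tube around $\{P=0\}$) and then applying the Liouville bound $\ll_P N$ at each such point, giving an acceptable $O(1/N)$ contribution for $d\geq 2$. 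Second, Theorem~\ref{main} requires every component of $\{P=0\}$ to map onto under all surjections $\mathbb{G}_m^d\twoheadrightarrow\mathbb{G}_m^{d-1}$; the paper first reduces to $P$ irreducible and, by decreasing $d$ if necessary, to this hypothesis. Your proposal needs the same preliminary reduction.

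Finally, your sketch of how the sub-Liouville bound itself should be proved does not match the paper and, as stated, would not work. Effective Bogomolov / essential-minimum estimates (Amoroso--David, Zhang) lower-bound the height of \emph{non-torsion} small algebraic points away from a subvariety; they say nothing a priori about the Archimedean distance from an exact $N$-torsion point to $\{P=0\}$ as a function of $\phi(N)$. The paper instead constructs, via Siegel's lemma, an auxiliary integer polynomial $F$ of subexponential height vanishing to high order along the Cartesian product variety $\{P(\mathbf{y}_1,\mathbf{z})=\cdots=P(\mathbf{y}_m,\mathbf{z})=0\}$, evaluates $N_{\Q(\mu_N)/\Q}\bigl(F(\boldsymbol{\zeta}_1,\ldots,\boldsymbol{\zeta}_m)\bigr)\in\Z$ at Cartesian $m$-tuples, and invokes the Laurent--Sarnak torsion-coset theorem to handle identical vanishing of $F$ on $S\times\cdots\times S$, feeding an induction on the number of free coordinates. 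If you intend to prove Theorem~\ref{main} rather than quote it, this is the mechanism you would need.
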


The precise statement of the equidistribution consequence about periodic trajectories is as follows. The proof of the implication can be found in Ward's paper~\cite{wardexp} as well as in section~22 of Schmidt's book~\cite{schmidt}.

\begin{corol} \label{equid}
For $N \in \N$, let $\mu_{N}$ be the push-forward on $\mathcal{M}(X)$ of the Haar measure of the subgroup $\mathrm{Per}_N(T)$ of $N \cdot \Z^d$-periodic points of $(X,T)$. If $\nu \in \mathcal{M}(X)$ is a probability measure having $\mu_{N_i} \to \nu$ in the weak-$*$ topology for some sequence $N_i \to \infty$, then $\nu$ is a measure of maximum entropy: $h(\nu) = h(T)$.

In particular, if the system $(X,T)$ has a completely positive entropy, then the measures $\mu_{N}$ converge to the Haar measure of $X$: the periodic points equidisitribute in the Haar measure.
\end{corol}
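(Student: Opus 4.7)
The plan is to derive the corollary from Theorem~\ref{convergence}(A) by combining the variational principle (for the upper bound) with a Katok-style separated-set lower bound on the entropy of the weak-$*$ limit. Observe first that each $\mu_N$ is a $T$-invariant probability measure, being the normalized Haar measure of a closed $T$-invariant subgroup of $X$, so any weak-$*$ limit $\nu$ is also $T$-invariant and the variational principle gives $h(\nu) \le h(T)$.

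For the matching lower bound, let $Q_N := [0,N)^d \subset \Z^d$ and equip $X$ with the Bowen pseudometric $d_{Q_N}(x,y) := \max_{\mathbf{n} \in Q_N} d(T^{\mathbf{n}} x, T^{\mathbf{n}} y)$. The main geometric input is a uniform separation estimate: there exist constants $\varepsilon_0 > 0$ and $c > 0$ such that, for each $N$, at least a $c$-fraction of the $P_N(T)$ connected components $C$ of $\mathrm{Per}_N(T)$ admit representatives $x_C$ that are mutually $\varepsilon_0$-separated in $d_{Q_N}$. This is the expansiveness reflex on the coset group $\pi_0(\mathrm{Per}_N(T))$, made possible by the Noetherianness of the dual module $\mathfrak{M} = \widehat{X}$: finitely many generating characters of $\mathfrak{M}$, transported over $Q_N$ by the $R_d$-action, detect the non-trivial cosets up to a sub-exponential exceptional set. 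Granting the separation, the measures $\mu_{N_i}$ carry uniform mass on an $\varepsilon_0$-separated set of cardinality $e^{(1+o(1))h(T) N_i^d}$ by Theorem~\ref{convergence}(A). A diagonal argument transfers this to an analogous large $\varepsilon_0$-separated set of near-full $\nu$-mass (using that the pieces $\{x_C\}$ are uniformly $\mu_{N_i}$-distributed and that weak-$*$ convergence transfers mass estimates through continuity sets), and Katok's inequality in its $\Z^d$-action form then yields
\[
h(\nu) \;\geq\; \limsup_{i \to \infty} \frac{\log P_{N_i}(T)}{|Q_{N_i}|} \;=\; h(T).
\]
Combined with the upper bound, $h(\nu) = h(T)$.

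The second assertion is immediate from Berg's theorem in the form proved for algebraic $\Z^d$-actions by Lind, Schmidt and Ward in \cite{lindschmidtward}: completely positive entropy is equivalent to Haar being the unique measure of maximal entropy. Hence under that hypothesis every weak-$*$ limit of $\mu_N$ must equal Haar, and by compactness of $\mathcal{M}(X)$ the whole sequence $\mu_N$ converges weakly to Haar.

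The principal technical difficulty is the uniform separation claim, since an algebraic $\Z^d$-action need not be globally expansive; the ``toral'' non-expansive directions are handled by the same d\'evissage to cyclic modules $R_d/(P)$ used in \cite{lindschmidtward,wardexp}, where on each such direction the separation is recovered from Gelfond-style lower bounds on the distance from an $N$-th root of unity to a fixed algebraic unit. Because this is precisely the Diophantine content of Theorem~\ref{convergence}, no new input beyond what we have already established is required to push the argument through.
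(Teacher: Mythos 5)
The paper does not argue this corollary: it is derived from Theorem~\ref{convergence} purely by citation to Ward~\cite{wardexp} and Schmidt~\cite{schmidt}, section~22, with the author explicitly noting that those details are well known and suppressed. Your self-contained sketch along Bowen--Katok lines is therefore a genuinely different route, and your treatment of the second assertion --- via the Lind--Schmidt--Ward form of Berg's theorem and compactness of $\mathcal{M}(X)$ --- is correct. The first assertion, however, rests on two claims that you acknowledge but do not establish, and both hide the real substance of the implication.

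The ``uniform separation estimate'' (a fixed $\varepsilon_0 > 0$ and $c > 0$ with a $c$-fraction of the $P_N(T)$ components of $\mathrm{Per}_N(T)$ pairwise $(\varepsilon_0,Q_N)$-separated) is exactly the pointwise strengthening of the Diophantine information that Theorem~\ref{convergence} supplies only as an average over $\mu_N^d$. In a non-expansive algebraic $\Z^d$-action nothing a priori prevents a positive proportion of periodic components from staying inside an exponentially small Bowen ball over the whole window $Q_N$; ruling that out is not a ``reflex of Noetherianness,'' and the assertion that ``no new input beyond what we have already established is required to push the argument through'' is too quick --- converting the averaged Mahler-measure convergence into a pointwise separation count is a substantive step that needs to be done. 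Second, the passage from a large $\mu_{N_i}$-mass separated set to a lower bound on $h(\nu)$ is delicate: the separated sets live in the $\nu$-null support of $\mu_{N_i}$, the entropies $h(\mu_{N_i})$ themselves need not approach $h(T)$ (they vanish outright whenever $\mathrm{Per}_{N_i}(T)$ is finite), and entropy is not lower semicontinuous under weak-$*$ limits. So the ``diagonal argument through continuity sets'' has to be made concrete: one would fix a finite $\nu$-continuity partition $\mathcal{P}$ of diameter $< \varepsilon_0$, show $H_{\mu_{N_i}}(\mathcal{P}^{Q_{N_i}}) \geq (1-o(1))\,h(T)\,|Q_{N_i}|$ from the separation, and then justify passing this through the weak-$*$ limit to a bound on $h_\nu(T,\mathcal{P})$. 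Until both steps are filled in, the first assertion is not proved by your argument, and the cited Ward--Schmidt route, which works through the algebraic structure of the periodic-point subgroups, avoids these issues.
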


One may wish to refine the equidistribution corollary to individual periodic trajectories. However, at least for rank $d = 1$, some care is needed in such a statement. Already for the case of the doubling map on the circle $\R/\Z$, it is not true that individual periodic trajectories are equidistributed in Haar measure as  $\langle \Gamma \rangle$ approaches infinity. This is similar to Duke's theorem, where an individual closed geodesic may be distributed in a practically arbitrary way. The parallel with Duke's theorem is discussed in Einsiedler, Lindenstrauss, Michel and Venkatesh~\cite{individual}, and appears to run rather deep. It includes the idea of equidistribution emerging from a lower bound on total length in terms of the discriminant of the quadratic order; respectively, what could be called following these authors the  ``discriminant'' (or ``denominator'') $\mathrm{disc}(P)$ of the periodic orbit. We shall not define this quantity here, but refer to section~1.7 of~\cite{individual} or to section~21 (``The discriminant-regulator paradox'') of McMullen~\cite{ctm} for the basic case of the doubling map of the circle, where the exponential sum bounds of Bourgain, Glibichuk and Konyagin~\cite{bgk,bourgain} are interpreted as giving equidistribution of individual periodic orbits having length $N$ exceeding $\mathrm{disc}(P)^{\epsilon}$ for a fixed $\epsilon > 0$. This refined equidistribution conjecture now extends to arbitrary Noetherian algebraic $\Z^d$-actions of finite entropy. In this generality, equidistribution appears to be a very difficult open problem, as is the corresponding conjecture in Duke's theorem and the higher rank generalizations thereof. A partial progress in the case of torus actions was made by Aka and Einsiedler~\cite{akaeinsiedler}.

We will focus our exposition on (B) and suppress the details of its equivalence with (A), and the consequence on equidistribution. Those details are well known in the $\Z^d$-actions community, and may be read, respectively, from the introduction following Theorem~1.3 of~\cite{atoral}, and from Ward's paper~\cite{wardexp} (exposed in chapter~22 of Schmidt's book~\cite{schmidt}).
Our proof of Theorem~\ref{convergence} is effective, in the sense of yielding a quantitative estimate on the convergence in (B) in terms of $d$, $N$ and the degree and height of the polynomial $P$.  Correspondingly, the equidistribution in the Corollary is also effective.

The convergence  problem (B) was also raised as Conjecture~15 in a recent paper~\cite{silverman} of Silverman, whose motivation was a study of higher rank divisibility sequences having a linear torus as the underlying algebraic group.
The cubical case of $\mu_N^d$ solved by Theorem~\ref{convergence} was given a separate attention as Conjecture~2 in Silverman's paper.

\subsection*{Acknowledgements}  The author is grateful to P. Habegger, D. Lind, H. Oh and T. Ward for their comments on the initial version of the paper,   P. Sarnak for pointing out the closely related problem of torsion growth in the homology of abelian covers, and A. Goncharov for his encouragement and advice throughout this project.

\section{Diophantine approximation by torsion points}  \label{statement}

By the classical Koksma-Hlawka inequality on numerical integration, Theorem~\ref{convergence} would follow at once if we could show
\begin{equation} \label{strong}
-\log{|P(\boldsymbol{\zeta}_N)|} = o_{N \to \infty}(N), \quad \textrm{for} \quad \boldsymbol{\zeta}_N \in \mu_N^d \, \textrm{ and } \, P(\boldsymbol{\zeta}_N) \neq 0.
\end{equation}
 This is not what we do, however, and~(\ref{strong}) remains wide open, as  does (B) for arbitrary sublattices $\Gamma \subset \Z^d$. It is important here to stress that (B) and (\ref{strong}) are both arithmetic statements, in that they could only be true for polynomials $P \in \bar{\Q}[x_1^{\pm},\ldots,x_d^{\pm}]$ having algebraic coefficients.
For polynomials with complex transcendental coefficients,   an exceptional set of $\boldsymbol{\zeta}$ is clearly needed in~(\ref{strong}).

Taking  account of such an exceptional set, a result of such a type does indeed hold, as Habbeger shows in a recently released manuscript~\cite{habegger}. Working in the much wider context of rational approximations to a subset of $\R^d$ definable in a fixed polynomially bounded $o$-minimal expansion of the real numbers, Habegger accomplishes this by continuing the determinental method of Bombieri, Pila and Wilkie; technically speaking, he does an extrapolation with an auxiliary function, and not an interpolation determinant. This continues an extensive literature on limiting the number of rational points lying in a definable set, which Habegger extends to rational points lying very near to the set. An overview of the method,
   in increasing order of technical detail, is presented  in Scanlon's paper~\cite{scanlon}, Zannier's monograph~\cite{zannier}, and the  recently released volume~\cite{ominimal} edited by Jones and Wilkie.

   Habegger proves a  general estimate of the same quality as the Pila-Wilkie counting theorem, in which closeness to the definable set is measured in terms of a Diophantine exponent depending on the definable set as well as the ``$\epsilon$'' of the counting theorem. The following result is implicit in Habegger's manuscript. It follows from his Theorem~2, using Ax's theorem~\cite{ax} for the description of the algebraic part of $P(e^{2\pi \sqrt{-1} x_1},\ldots,e^{2\pi \sqrt{-1} x_d}) = 0$.

Recalling the standard terminology, a \emph{torus coset} in $\mathbb{G}_m^d$ is a translate $\boldsymbol{\xi}T$ of a connected algebraic subgroup $T$ of $\mathbb{G}_m^d$ (an algebraic subtorus). The torus coset is called \emph{proper} if it is strictly contained in $\mathbb{G}_m^d$, i.e. $\dim{T} < d$. A \emph{torsion coset} is a torus coset $\boldsymbol{\xi} T$ having $\boldsymbol{\xi} \in \mu_{\infty}^d$ a torsion point. For a closed subvariety $X \subset \mathbb{G}_{m/\C}^d$ we write $X^{\circ}$ for $X$ minus the union of all positive dimensional torus cosets contained in $X$. This is a Zariski-open subset of $X$ (possibly empty), see for instance Theorem~4.2.3 (a) in Bombieri and Gubler's book~\cite{bg}. Embedding $\mathbb{G}_m^d \hookrightarrow \proj^d$ in the standard way, we denote by $\mathrm{dist}(\cdot,\cdot)$  the spherical distance on $\proj^d(\C)$ and use the convention that the distance to the empty set is zero.

 \begin{thm}[Habegger~\cite{habegger}]  \label{hab}
 Fix an $\epsilon > 0$ and a complex subvariety $X \subset \mathbb{G}_{m/\C}^d$.  Then, for every $N \gg_{X,\epsilon} 1$, all but at most $N^{\epsilon}$ points $\boldsymbol{\zeta} \in \mu_n^d$ having a finite order $n \leq N$ fulfil either
 \begin{equation} \label{strongish}
 - \log{\mathrm{dist}(\boldsymbol{\zeta},X)} \ll_{X,\epsilon} \log{N} \quad \textrm{or} \quad -\log{\mathrm{dist}(\boldsymbol{\zeta}, X \setminus X^{\circ})} \gg_{X,\epsilon} \log{N},
  \end{equation}
  with some implied constants depending on $X$ and $\epsilon$.
 \end{thm}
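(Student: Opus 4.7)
The plan is to transfer Theorem~\ref{hab} to a statement about rational points lying near a definable set in real space, which is precisely the framework of Habegger's Theorem~2. The transfer goes through the universal cover $\exp : \C^d \to \mathbb{G}_m^d(\C)$, $\mathbf{x} \mapsto (e^{2\pi \sqrt{-1} x_1},\ldots,e^{2\pi \sqrt{-1} x_d})$. A torsion point of order dividing $n$ is precisely the image of a rational vector in $\Q^d$ of denominator $n$; accordingly $\mu_n^d$ corresponds bijectively, modulo $\Z^d$, to rational points in the unit cube of height at most $n$. The subvariety $X \subset \mathbb{G}_{m/\C}^d$ pulls back to the set $Y = \exp^{-1}(X) \cap [0,1)^d$, which is definable in the polynomially bounded $o$-minimal structure $\R_{\mathrm{an}}$ since $\exp$ is real-analytic on any compact box. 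On this bounded domain, spherical distances on $\proj^d(\C)$ and Euclidean distances on $\R^d$ are comparable up to factors depending only on $X$.

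Having set up this correspondence, the next step is to apply Habegger's Theorem~2 from~\cite{habegger} to the pair $(Y,\epsilon)$. That result supplies a Diophantine exponent $c = c(Y,\epsilon) > 0$ such that, apart from an exceptional set of at most $\ll_{Y,\epsilon} N^\epsilon$ rational points of denominator at most $N$, every such rational $\mathbf{q}$ within distance $N^{-c}$ of $Y$ already lies within distance $\ll N^{-c'}$ of the \emph{algebraic part} of $Y$ in the sense of Pila--Wilkie, namely the union of the positive-dimensional connected semi-algebraic subsets contained in $Y$. Pushing forward through $\exp$, one obtains that all but $\ll_{X,\epsilon} N^\epsilon$ torsion points $\boldsymbol{\zeta} \in \mu_n^d$ with $n \leq N$ either stay at distance at least $N^{-c}$ from $X$---which yields the first alternative of~\eqref{strongish}---or come within $\ll N^{-c'}$ of the image in $X$ of the algebraic part of $Y$.

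The arithmetic identification of this image with $X \setminus X^\circ$ is where Ax's theorem~\cite{ax} enters. By the exponential Ax--Lindemann theorem, any positive-dimensional connected semi-algebraic subset of $Y$ is contained in an affine subspace $\mathbf{v} + L \subset \R^d$ whose exponential image $\exp(\mathbf{v}) \cdot \exp(L)$ is a torus coset contained in $X$. By the very definition of $X^\circ$, the union of all positive-dimensional torus cosets contained in $X$ equals $X \setminus X^\circ$. Consequently, once the $\ll N^\epsilon$ exceptional points are removed, any remaining $\boldsymbol{\zeta}$ failing the first alternative of~\eqref{strongish} must satisfy $\mathrm{dist}(\boldsymbol{\zeta}, X \setminus X^\circ) \ll_{X,\epsilon} N^{-c'}$, delivering the second alternative.

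The main obstacle is the analytic core of Habegger's Theorem~2, where the determinantal method of Bombieri--Pila--Wilkie is extended from counting rational points \emph{on} a definable set to counting rational points \emph{near} a definable set, and where the Diophantine exponent $c$ together with the implied constants must be made uniform in $N$ while depending only on $X$ and $\epsilon$. The remaining pieces---the pullback by $\exp$, the Ax-theoretic description of the algebraic part of $Y$, and its identification with $X \setminus X^\circ$---are essentially direct; however, it is worth emphasising that without Ax's theorem one would only obtain closeness to an abstract semi-algebraic piece of $Y$ rather than to the concrete arithmetic subvariety $X \setminus X^\circ$ of $X$ itself.
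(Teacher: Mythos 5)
Your proposal follows exactly the route the paper sketches for this citation: the paper does not give a self-contained proof of Theorem~\ref{hab}, but describes it as following from Habegger's Theorem~2 combined with Ax's theorem applied to the algebraic part of $\exp^{-1}(X)$; you flesh out that sketch correctly, with the determinantal extension of Bombieri--Pila--Wilkie (Habegger's Theorem~2) treated as the black box it is.

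One step is stated more loosely than it should be: you claim that ``spherical distances on $\proj^d(\C)$ and Euclidean distances on $\R^d$ are comparable up to factors depending only on $X$,'' and use this to pass between $\mathrm{dist}(\boldsymbol{\zeta},X)$ and $\mathrm{dist}(\mathbf{q},Y)$ with $Y=\exp^{-1}(X)\cap[0,1)^d\subset\R^d$. What is bi-Lipschitz is the comparison of the two \emph{metrics} on the compact torus; but $\mathrm{dist}(\boldsymbol{\zeta},X)$ is the distance from $\boldsymbol{\zeta}\in(S^1)^d$ to the full complex variety $X\subset(\C^*)^d$, whereas $\mathrm{dist}(\mathbf{q},Y)$ only sees $X\cap(S^1)^d$, and $X$ may approach the real torus closely without meeting it. The easy direction ($\mathrm{dist}(\mathbf{q},Y)$ small $\Rightarrow$ $\mathrm{dist}(\boldsymbol{\zeta},X)$ small) is Lipschitz, but the converse is only H\"older: one needs a \L ojasiewicz inequality for the real-analytic function $\mathbf{x}\mapsto\sum_i|P_i(\exp(\mathbf{x}))|^2$ to get $\mathrm{dist}(\mathbf{q},Y)\ll\mathrm{dist}(\boldsymbol{\zeta},X)^{\beta}$ for some $\beta>0$. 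Alternatively, one can sidestep this by pulling back to $\R^{2d}=\C^d$ and working with the definable set $\exp^{-1}(X)$ in all $2d$ real coordinates, restricting the rational points to the subspace $\R^d\times\{0\}^d$; there the exponential is a local biholomorphism and the distance comparison is genuinely bi-Lipschitz. Either repair yields only polynomial losses, so the conclusion survives, but as written the transfer is not justified. Everything else---the dictionary between $\mu_n^d$ and rational points of bounded denominator, the appeal to Habegger's exponent $c$, and the Ax--Lindemann identification of the algebraic part of $\exp^{-1}(X)$ with preimages of torus cosets in $X\setminus X^{\circ}$---is correct and matches the intended argument.
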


Compared to this we give the following result, which is limited to the $\bar{\Q}$-case and is stronger regarding the number of exceptions of a fixed order $N$. In contrast, it is less precise regarding bounded orders $n \leq N$ and the quality of the bound itself. We only consider an $o(\phi(N))$  bound as it is precisely the ``sub-Liouville'' quality of it that matters in our application to Theorem~\ref{convergence}.

For a non-zero multivariate Laurent polynomial $P \in \Z[x_1^{\pm 1}, \ldots, x_d^{\pm 1}]$, let $h(P)$ be the logarithm of the maximum absolute value of  a coefficient of $P$.  We take the degree $\deg{P}$ under the standard embedding $\mathbb{G}_m^d \hookrightarrow \proj^d$. We write $\phi(N) := [\Q(\mu_N):\Q] < N$ for Euler's function.

\begin{thm}  \label{main}
Fix an $\varepsilon > 0$ and an  integer Laurent polynomial
$$
P \in \Z[x_1^{\pm 1}, \ldots, x_d^{\pm 1}]
$$
whose zero locus $\{P = 0 \} \subset \mathbb{G}_m^d$ has each of its components mapping onto under all surjective homomorphisms $\mathbb{G}_m^d \twoheadrightarrow \mathbb{G}_m^{d-1}$. Then there are effectively computable functions
$$
M(d,\varepsilon,\deg{P}) < \infty \quad \textrm{and} \quad \quad N_0(d,\varepsilon,\deg{P},h(P)) < \infty
$$
and a finite union $Z = Z(d,\varepsilon,\deg{P})$ of proper torsion cosets $\boldsymbol{\mu}T \subsetneq \mathbb{G}_m^d$,
 such that the following is true:

  For every $N \geq N_0$,  all but at most $M(d,\varepsilon,\deg{P})$  points $\boldsymbol{\zeta} \in \mu_N^d$ satisfy either
\begin{equation} \label{subliouville}
 -\log{|P(\boldsymbol{\zeta})|} \leq \varepsilon \phi(N) \quad \textrm{ or } \quad P \in Z.
\end{equation}

\smallskip

The number of torsion cosets $\boldsymbol{\mu}T$ in $Z$, as well as the degrees of the subtori $T$ and orders of the torsion points $\boldsymbol{\mu}$, are also bounded by an effectively computable function of $d,\varepsilon$ and $\deg{P}$.
\end{thm}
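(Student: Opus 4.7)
The plan is to prove the theorem by induction on the dimension $d$, combining Galois averaging over the cyclotomic group with the effective form of Gelfond's theorem via Baker's bound on linear forms in logarithms to handle the base case $d=1$, and then propagating sub-Liouville bounds to higher $d$ through a fibration by one-parameter subgroups of the torus. The starting observation in any dimension is that, for $\boldsymbol{\zeta}\in\mu_N^d$ with $P(\boldsymbol{\zeta})\neq 0$, the product $\prod_{a\in(\Z/N)^{\times}}P(\boldsymbol{\zeta}^a)$ across the cyclotomic Galois orbit is a nonzero rational integer up to a cyclotomic unit, of absolute value at least one. This yields immediately the Liouville-type bound $-\log|P(\boldsymbol{\zeta})|\leq C(P)\phi(N)$; the task is to replace $C(P)$ by an arbitrary $\varepsilon$ by ruling out Galois conjugates approaching $\{P=0\}$ faster than sub-Liouville outside a controlled collection of torsion cosets.

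For $d=1$ the theorem reduces to Gelfond's effective theorem via Baker: for an algebraic number $\alpha$ of fixed degree and height that is not a root of unity, one has $-\log|\alpha-\eta|\ll (\log n)^{\kappa}$ uniformly for $\eta\in\mu_n$, $\eta\neq\alpha$. For general $d$, I would slice $\mathbb{G}_m^d$ by cosets of a primitive one-parameter subgroup $\gamma_{\mathbf{m}}:\mathbb{G}_m\hookrightarrow\mathbb{G}_m^d$, $t\mapsto(t^{m_1},\ldots,t^{m_d})$, with $\mathbf{m}$ chosen generically with respect to $\deg P$. Each $\boldsymbol{\zeta}\in\mu_N^d$ lies on a unique coset $\boldsymbol{\xi}\cdot\gamma_{\mathbf{m}}(\mathbb{G}_m)$ along which $P$ restricts to a one-variable Laurent polynomial $P_{\boldsymbol{\xi}}(t):=P(\xi_1 t^{m_1},\ldots,\xi_d t^{m_d})$, with $P(\boldsymbol{\zeta})=P_{\boldsymbol{\xi}}(\eta)$ whenever $\boldsymbol{\zeta}=\boldsymbol{\xi}\cdot\gamma_{\mathbf{m}}(\eta)$. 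The non-degeneracy hypothesis --- that no component of $\{P=0\}$ is invariant under translation by any one-dimensional subtorus --- combined with a Bezout-type count controlled by $\deg P$ ensures that $P_{\boldsymbol{\xi}}$ vanishes identically or admits a cyclotomic factor only for $\boldsymbol{\xi}$ in a geometrically controlled proper subvariety of $\mathbb{G}_m^{d-1}$; by Laurent's theorem on torsion points in subvarieties of tori, the torsion content of this subvariety is a finite union of torsion cosets, which pull back to furnish the proper torsion cosets populating $Z(d,\varepsilon,\deg P)$. A height comparison inside the algebraic family $\{P_{\boldsymbol{\xi}}\}_{\boldsymbol{\xi}\in\mathbb{G}_m^{d-1}}$, exploiting that $h(\boldsymbol{\xi})=0$ for torsion $\boldsymbol{\xi}$, bounds $\deg\alpha$ and $h(\alpha)$ uniformly in terms of $\deg P$ and $h(P)$ for every unit-circle root $\alpha$ of $P_{\boldsymbol{\xi}}$, so the $d=1$ input applies uniformly on cosets outside $Z$ and yields $-\log|P(\boldsymbol{\zeta})|\leq\varepsilon\phi(N)$ once $N\geq N_0(d,\varepsilon,\deg P,h(P))$; the $h(P)$-dependence of the theorem enters only through $N_0$.

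The main obstacle I anticipate is enforcing the $h(P)$-independence of the exception count $M(d,\varepsilon,\deg P)$. A single fibration in one direction $\mathbf{m}$ leaves as residuals all torsion points whose $\gamma_{\mathbf{m}}$-component has order below the Baker threshold $N_0$, a cardinality that grows with $h(P)$. To bring $M$ down to a function of $d,\varepsilon,\deg P$ alone, the slicing must be iterated across several primitive directions $\mathbf{m}$ spanning $\Z^d$: the non-degeneracy hypothesis should ensure that a torsion point whose component has small order in \emph{every} direction lies in a highly rigid configuration of bounded cardinality---in the spirit of the Bombieri-Masser-Zannier theory of anomalous intersections in algebraic tori---whose size is controlled purely by $\deg P$ and $\varepsilon$. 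The exceptional torsion points that survive this iteration but cannot be absorbed into $Z$ form the bounded $M$ residuals of the theorem.
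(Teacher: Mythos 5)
Your proposal takes a genuinely different route from the paper's (the paper builds a Thue--Siegel auxiliary polynomial in $m$ blocks of variables, shows it cannot vanish on the whole Cartesian power $S\times\cdots\times S$ by invoking the Laurent--Sarnak theorem, and then plays the product formula over $\Q(\mu_N)$ against the Siegel-lemma height bound). However, your plan has a gap at the heart of the induction step that prevents it from working.

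The false step is the claim that ``a height comparison inside the algebraic family $\{P_{\boldsymbol{\xi}}\}$ \ldots\ bounds $\deg\alpha$ and $h(\alpha)$ uniformly in terms of $\deg P$ and $h(P)$ for every unit-circle root $\alpha$ of $P_{\boldsymbol{\xi}}$.'' This is true for $h(\alpha)$ but false for $\deg\alpha$. When you slice $\mu_N^d$ by the direction $\gamma_{\mathbf m}$, the base point $\boldsymbol{\xi}$ of the coset is itself an $N$-torsion point, so $P_{\boldsymbol{\xi}}(t)=P(\xi_1 t^{m_1},\ldots,\xi_d t^{m_d})$ has coefficients in the cyclotomic field $\Q(\mu_N)$, not in $\Q$. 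Its roots $\alpha$ are therefore algebraic numbers whose degree over $\Q$ typically grows like $\phi(N)\cdot\deg_t P_{\boldsymbol{\xi}}$, i.e.\ linearly in $\phi(N)$. The degree cannot be decoupled from $N$; there is no algebraic family argument that collapses it to a function of $\deg P$ and $h(P)$ only, because $\Q(\boldsymbol{\xi})=\Q(\mu_N)$ is genuinely large.

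Once the degree $D$ of $\alpha$ grows like $\phi(N)$, the one-variable input you invoke collapses. Baker--W\"ustholz (the effective form of Gelfond's theorem) gives $-\log|\alpha^b-1|\ll D^4\log D\cdot h'(\alpha)\log b$, which improves on Liouville's $\ll Db\cdot h(\alpha)$ only when $b\gg D^3$. In your setting $b$ and $D$ are both of order $N$, so the bound is vastly \emph{worse} than Liouville, and no known refinement of logarithmic linear forms theory repairs this. This is precisely the obstruction discussed in the paper's Section~\ref{baker} (``Logarithmic forms''): the $D^4$ power is a barrier for Gelfond's, Schneider's and Baker's methods, and it is exactly the degree aspect (rather than the $b$ aspect) that is relevant here. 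Your iterated-slicing idea for controlling the exception count $M$ via Bombieri--Masser--Zannier anomalous intersections is addressing a secondary concern; the primary failure is that no single slice gives a sub-Liouville bound at all. The paper circumvents this by never passing to one variable: it stays in all $d$ variables and $m$ independent copies of the torus, so that the only Galois-theoretic input needed is the product formula over $\Q(\mu_N)$ applied to a small-height auxiliary polynomial, bypassing the degree barrier entirely.
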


The restriction to $\Z$-coefficients as opposed to $\bar{\Q}$-coefficients is no loss of generality.  Liouville's ``trivial'' Diophantine bound  gives (\ref{subliouville}) with some $\varepsilon < \infty$, depending on $d, \deg{P}$ and $h(P)$. Thus~(\ref{subliouville}) amounts to the subexponential improvement over Liouville's bound. In that regard it is a typical Diophantine statement, although we conjecture that the conclusion extends to complex polynomials.

By induction on the dimension, we derive the following version for subvarieties, similar to Habegger's Theorem~\ref{hab}.

\begin{thm} \label{subvar}
Fix an $\varepsilon > 0$ and a subvariety $X \subset \mathbb{G}_{m/\C}^d$ that can be defined over a number field. Then there is a finite constant $M = M(d,\varepsilon,\deg{X}) < \infty$ such that, for all $N \gg_{X,
\varepsilon} 1$, all but at most $M$ points $\boldsymbol{\zeta} \in \mu_N^d$ fulfil either
 \begin{equation} \label{weakish}
 - \log{\mathrm{dist}(\boldsymbol{\zeta},X)} \leq \varepsilon \phi(N) \quad \textrm{or} \quad -\log{\mathrm{dist}(\boldsymbol{\zeta}, X \setminus X^{\circ})} \geq \varepsilon \phi(N).
  \end{equation}
\end{thm}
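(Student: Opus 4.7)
The plan is to induct on the ambient dimension $d$. The base case $d = 1$ is trivial: every subvariety of $\mathbb{G}_m$ is either a finite set (so that $X = X^\circ$ renders condition~(\ref{weakish})(ii) vacuously true under the convention $\mathrm{dist}(\cdot, \emptyset) = 0$) or the full torus (a torus coset, for which (\ref{weakish})(i) and (ii) are complementary bounds on $-\log\mathrm{dist}(\zeta, X)$).

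For the inductive step, suppose the theorem holds in all ambient dimensions strictly less than $d$, and fix $X \subset \mathbb{G}_m^d$. Decomposing into irreducible components over $\bar{\Q}$ and summing exceptional sets, it suffices to treat $X$ irreducible and defined over $\bar{\Q}$; and if $X$ is itself a torus coset the dichotomy is trivial as in the base case. Thus I may assume $X$ is irreducible, not a torus coset, with both $X^\circ$ and $X \setminus X^\circ$ nonempty. Pick a nonzero integer Laurent polynomial $P_0 \in I(X) \cap \Z[x_1^{\pm 1}, \dots, x_d^{\pm 1}]$ with $\deg P_0 = O_d(\deg X)$, e.g.\ a Galois-symmetrized coordinate of the Chow form of $X$. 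After stripping off the codimension-one torsion coset factors of $P_0$ — each of the form $\mathbf{x}^{\mathbf{a}} - c$ with $c$ a root of unity, and handled separately by the induction hypothesis applied in $\mathbb{G}_m^{d-1}$ via the translation identifying the coset with a subtorus — I may arrange that every component of $\{P_0 = 0\}$ satisfies the hypothesis of Theorem~\ref{main}.

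Apply Theorem~\ref{main} to $P_0$ at tolerance $\varepsilon' = \varepsilon/2$: outside a bounded exceptional set of size $M_0(d, \varepsilon, \deg X)$, every $\zeta \in \mu_N^d$ either satisfies $|P_0(\zeta)| \geq e^{-\varepsilon' \phi(N)}$, or lies in one of the finitely many proper torsion cosets $\boldsymbol{\mu} T \subsetneq \mathbb{G}_m^d$ of the set $Z = Z(d, \varepsilon, \deg X)$. Using $X \subseteq \{P_0 = 0\}$ together with the Lipschitz bound $|P_0(\zeta)| \leq C(P_0) \cdot \mathrm{dist}(\zeta, X)$, any $\zeta$ violating~(\ref{weakish})(i) at level $\varepsilon$ must, for $N$ sufficiently large, belong to $\bigcup_{\boldsymbol{\mu} T \in Z} \boldsymbol{\mu} T$. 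For each such coset $\boldsymbol{\mu} T$, the translation by $\boldsymbol{\mu}^{-1}$ together with an isomorphism $T \cong \mathbb{G}_m^{\dim T}$ identifies $X' := \boldsymbol{\mu}^{-1}(X \cap \boldsymbol{\mu} T)$ as a subvariety of $\mathbb{G}_m^{\dim T}$ defined over a number field, with $\dim T < d$. The points $\zeta \in \mu_N^d \cap \boldsymbol{\mu} T$ correspond to torsion points $t = \boldsymbol{\mu}^{-1} \zeta$ of order dividing $\mathrm{lcm}(N, \mathrm{ord}(\boldsymbol{\mu}))$, and since Theorem~\ref{main} bounds $\mathrm{ord}(\boldsymbol{\mu})$ by a function of $d, \varepsilon, \deg X$, one has $\phi(\mathrm{lcm}(N, \mathrm{ord}(\boldsymbol{\mu}))) = O(\phi(N))$. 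The induction hypothesis applied to $X' \subset \mathbb{G}_m^{\dim T}$ at a suitable smaller tolerance yields a bounded number of exceptional $t$, and every positive-dimensional torus coset $S' \subseteq X'$ pushes forward under $\boldsymbol{\mu} \cdot$ to a positive-dimensional torus coset $\boldsymbol{\mu} S' \subseteq X \cap \boldsymbol{\mu} T \subseteq X$, hence lies in $X \setminus X^\circ$. Thus~(\ref{weakish})(ii) for $X'$ transfers directly to~(\ref{weakish})(ii) for $X$.

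The main obstacle is the effective bookkeeping along the induction, to guarantee that $M$ and the threshold $N_0$ depend only on $d, \varepsilon, \deg X$. The key inputs are: (a) $\deg P_0 = O_d(\deg X)$, effective; (b) the B\'ezout bound $\deg X' \leq \deg X \cdot \deg(\boldsymbol{\mu} T)$, with $\deg(\boldsymbol{\mu} T)$ effectively controlled by Theorem~\ref{main}; and (c) $\phi(\mathrm{lcm}(N, m)) \leq m \cdot \phi(N) = O(\phi(N))$ as $N \to \infty$ for bounded $m$. A secondary subtlety is the initial choice of $P_0$ with every component of $\{P_0 = 0\}$ satisfying the non-degeneracy hypothesis of Theorem~\ref{main}: extracting and analyzing separately the codimension-$1$ torsion coset factors accounts for this and stays within the same effective framework, since each such factor reduces via translation to the induction hypothesis in the strictly smaller ambient $\mathbb{G}_m^{d-1}$.
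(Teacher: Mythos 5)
Your argument follows the paper's general strategy: pick a low-degree integer polynomial $P_0$ vanishing on $X$, apply Theorem~\ref{main} to $P_0$ via the Lipschitz comparison $|P_0(\boldsymbol{\zeta})| \ll_{P_0} \mathrm{dist}(\boldsymbol{\zeta}, X)$, and handle the exceptional torsion cosets by inducting on ambient dimension. The coset step (passing to $\boldsymbol{\mu}^{-1}X \cap T \subset T \cong \mathbb{G}_m^{\dim T}$, noting that positive-dimensional cosets in the translate push forward into $X \setminus X^{\circ}$, controlling $\phi$ of the relevant level via the bounded order of $\boldsymbol{\mu}$) is sound, as is the reduction to $X$ irreducible.

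The gap is in the step where you arrange for $P_0$ to satisfy the hypothesis of Theorem~\ref{main}, namely that every irreducible component of $\{P_0 = 0\}$ maps onto under every surjection $\mathbb{G}_m^d \twoheadrightarrow \mathbb{G}_m^{d-1}$. You assert that the only obstructions are codimension-one torsion coset factors of $P_0$, each of the form $\mathbf{x}^{\mathbf{a}} - c$ with $c$ a root of unity, and that stripping these off leaves a polynomial to which Theorem~\ref{main} applies. This is not correct. An irreducible factor $Q$ of $P_0$ violates the hypothesis precisely when $\{Q = 0\}$ fails to dominate $\mathbb{G}_m^{d-1}$ under some surjection $p$; by a dimension count, since $\{Q = 0\}$ is a hypersurface, this forces $\{Q = 0\} = p^{-1}(W)$ for a hypersurface $W \subsetneq \mathbb{G}_m^{d-1}$, equivalently $Q$ is (after a monoidal change of coordinates) a Laurent polynomial in at most $d-1$ monomials. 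For example, in $\mathbb{G}_m^3$ the irreducible factor $Q = x_1 + x_2 - 1$ is such an obstruction --- its zero locus projects onto the proper curve $\{x_1 + x_2 = 1\} \subsetneq \mathbb{G}_m^2$ under $(x_1,x_2,x_3) \mapsto (x_1,x_2)$ --- yet $Q$ is manifestly not a torsion coset equation. Your stripping procedure therefore leaves factors that still violate the hypothesis, and the single induction on the ambient dimension $d$ has no mechanism to absorb them.

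This is exactly the point where the paper introduces a \emph{second} inductive variable. The paper takes integer polynomials $P_1, \ldots, P_r$ cutting out $X$ and runs a double induction on $d$ and on $\sum_i \deg P_i$. If $P_1$ has a degenerate irreducible factor $Q$ with $V := \{Q = 0\} = p^{-1}(W)$, then $V$ is ruled by translates of the one-dimensional subtorus $\ker p$; when $V \subset X$ this places $V$ inside $X \setminus X^{\circ}$, so condition~(ii) of~(\ref{weakish}) holds automatically for points near $V$, and points near the Zariski closure of $X \setminus V$ are handled by the $\sum_i \deg P_i$-induction after replacing $P_1$ by $P_1/Q$. To repair your proof you would either need to incorporate this secondary degree induction, or else establish --- with effective degree control --- the existence of a $P_0 \in I(X) \cap \Z[\mathbf{x}^{\pm 1}]$ whose zero locus has no ruled component whatsoever; the latter is a nontrivial claim that your Chow-form construction does not automatically deliver.
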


We spell out the special case of Theorem~\ref{subvar} that $X$ is the hypersurface $1+x_1+\cdots+x_d = 0$. As is easily seen, the locus $X \setminus X^{\circ}$ is in this case given by the points of $X$ defined by the vanishing $x_{i_1} + \cdots + x_{i_r} = 0$ of a subsum consisting of $r \geq 2$ terms. Thus we get:

\begin{corol}  \label{corolsums}
 For every $\varepsilon > 0$ and $k \in \N$, there is a finite computable constant $C(k,\varepsilon)$ such that, for all $N$, all but at most $C(k,\varepsilon)$ sums $1+\zeta_1+ \cdots + \zeta_k$ with $\zeta_1, \ldots, \zeta_k \in \mu_N$ either  fulfil
 $$
 |1 + \zeta_1+ \cdots + \zeta_k| > e^{-\varepsilon \phi(N)},
$$
  or else have a proper subsum $\zeta_{i_1} + \cdots + \zeta_{i_r}$ with $|\zeta_{i_1} + \cdots + \zeta_{i_r}| < e^{-\varepsilon \phi(N)}$.
\end{corol}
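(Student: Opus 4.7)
The plan is to deduce Corollary \ref{corolsums} directly from Theorem \ref{subvar}, applied to the hyperplane $X=\{1+x_1+\cdots+x_k=0\}\subset\mathbb{G}_m^k$, which is defined over $\Q$ and has projective degree $1$ under the standard embedding $\mathbb{G}_m^k\hookrightarrow\proj^k$. The two alternatives of (\ref{weakish}) will translate, via the Fubini-Study geometry of linear subvarieties, into size bounds on subsums of $1+\zeta_1+\cdots+\zeta_k$.

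For the first alternative, a routine computation (exploiting that the image $[1:\zeta_1:\cdots:\zeta_k]$ of $\boldsymbol{\zeta}\in\mu_N^k$ lies on the sphere of radius $\sqrt{k+1}$ in $\C^{k+1}$, and that $X$ is defined by the unit-norm linear form $(y_0+\cdots+y_k)/\sqrt{k+1}$) yields $\mathrm{dist}(\boldsymbol{\zeta},X)\asymp_k|1+\zeta_1+\cdots+\zeta_k|$, with comparability constants favourable for $k\ge 1$. The first alternative of (\ref{weakish}) thus implies the first alternative of the corollary.

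For the second alternative, I need the explicit description of $X\setminus X^\circ$ already foreshadowed in the paper. A standard character-theoretic calculation shows that a positive-dimensional torus coset $\boldsymbol{\xi}T\subset X$ corresponds precisely to a partition $\mathcal{P}=(B_1,\ldots,B_t)$ of $\{0,1,\ldots,k\}$ (with the convention $x_0:=1$) satisfying $t\ge 2$, $|B_j|\ge 2$ for each $j$, and $\sum_{i\in B_j}\xi_i=0$ for each $j$ (with $\xi_0:=1$); the forward direction uses linear independence on $T$ of the distinct character restrictions $\chi_{[B_j]}|_T$, while the reverse is immediate. Hence $X\setminus X^\circ=\bigcup_\mathcal{P}Y_\mathcal{P}$ is a finite union of linear subvarieties $Y_\mathcal{P}$ defined by the block equations $\sum_{i\in B_j}y_i=0$, and the analogous Fubini-Study computation gives $\mathrm{dist}(\boldsymbol{\zeta},Y_\mathcal{P})\asymp_k\max_j|\sum_{i\in B_j}\zeta_i|$. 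The second alternative of (\ref{weakish}) then produces, for some $\mathcal{P}$ and some block $B_{j^*}\subsetneq\{0,\ldots,k\}$ (such exists because $t\ge 2$), a subsum $|\sum_{i\in B_{j^*}}\zeta_i|\ll_k e^{-\varepsilon\phi(N)}$.

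To upgrade the $\ll_k$ into the strict $<e^{-\varepsilon\phi(N)}$ demanded by the corollary, I invoke the sub-Liouville gap for sums of roots of unity (implicit within Theorem \ref{subvar} itself, or Gelfond's theorem in the one-variable case): any non-zero such sum in $\mu_N$ has absolute value at least $e^{-o(\phi(N))}$, which past a $(k,\varepsilon)$-dependent threshold strictly exceeds the $\ll_k e^{-\varepsilon\phi(N)}$ upper bound above. This forces $\sum_{i\in B_{j^*}}\zeta_i=0$ exactly, hence trivially $<e^{-\varepsilon\phi(N)}$. Combining Theorem \ref{subvar}'s uniformly bounded exceptional count $M(k,\varepsilon,1)$ with the uniformly bounded $|\mu_N^k|$ for the finitely many $N$ below the threshold produces the required single uniform constant $C(k,\varepsilon)$. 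The main obstacle — and the only step with real content — is the character-theoretic identification of $X\setminus X^\circ$; the distance translations and the sub-Liouville upgrade are routine, though the latter quietly draws on the Diophantine input that animates the paper as a whole.
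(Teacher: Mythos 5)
Your core strategy coincides with the paper's: specialize Theorem~\ref{subvar} to the hyperplane $X=\{1+x_1+\cdots+x_k=0\}$ and translate the two alternatives via the distance comparisons, after identifying $X\setminus X^\circ$ with the locus where a block-partition of $\{0,1,\ldots,k\}$ into pieces of size $\geq 2$ has all block sums vanishing. Your character-theoretic derivation of that identification is sound and fills in a step the paper only asserts.

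The genuine gap is in your ``sub-Liouville upgrade.'' You claim that \emph{any} non-zero sum of roots of unity in $\mu_N$ has absolute value at least $e^{-o(\phi(N))}$, and that this is ``implicit within Theorem~\ref{subvar} itself, or Gelfond's theorem in the one-variable case.'' Neither is true. Theorem~\ref{subvar} always carries an exceptional set of size $M(d,\varepsilon,\deg X)$, so it cannot deliver an exception-free lower bound on the subsums; Gelfond's theorem concerns a single root of unity approaching a fixed \emph{non-torsion} algebraic point, not multi-term sums of roots of unity. The statement you invoke is precisely the exception-free bound~(\ref{strong}), which the paper explicitly flags as ``wide open,'' and whose specialization to Corollary~\ref{corolsums} (namely $C(k,\varepsilon)=0$ for $N\gg 1$) is Myerson's open question from the 1980s, discussed in section~7.2. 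Your argument would, if correct, resolve Myerson's problem; what it actually does is assume it. From this unjustified gap you then conclude $\sum_{i\in B_{j^*}}\zeta_i=0$ exactly, which is far stronger than anything available — indeed the whole point of the corollary is to allow tiny-but-nonzero subsums. The clean fix is much tamer: the multiplicative slack $\ll_k$ between $\mathrm{dist}(\cdot,Y_\mathcal{P})$ and the block sums is a bounded constant depending only on $k$, so it is absorbed by applying Theorem~\ref{subvar} with a slightly perturbed parameter $\varepsilon'$ once $\phi(N)$ exceeds a $(k,\varepsilon)$-dependent threshold, with the finitely many small $N$ covered by the trivial bound $|\mu_N^k|\leq N^k$. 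No exception-free Diophantine input is needed, and none is available.
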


    Habegger's and our methods can be described as doing Diophantine approximations, respectively, on the domain and codomain of the exponential (uniformization) map $\exp: \C^d \twoheadrightarrow \mathbb{G}_m^d(\C)$. Either  seems intrinsically incapable of reaching the other's result. In our case, this is because we work from the product formula in the level $N$ cyclotomic field,  doing Diophantine approximations on a power of the ambient linear torus where we need the torsion points to lie in a common cyclotomic field. In Habegger's case, it is due to working with the theory of the exponential function in a general context of rational approximation to sets definable in a polynomially bounded $o$-minimal expansion of the reals, where the transposition of our result towards~(\ref{strong}) does not hold.

If a point $\boldsymbol{\zeta}$ of order $N$ lies exactly on the hypersurface ($P(\boldsymbol{\zeta}) = 0$), then so does its Galois orbit, whose size $\phi(N) \to \infty$. Thus,  Theorem~\ref{main} is a generalization of the
 classical structural theorem about torsion points satisfying a polynomial relation, conjectured by Lang and first proved, independently, by Laurent~\cite{laurent} and Sarnak (in an unpublished manuscript, later revisited by Sarnak and Adams as~\cite{sarnakadams}):
  \begin{quote}
  The set of torsion points lying in a fixed proper algebraic subset $X \subsetneq \mathbb{G}_m^d$ over $\Q$ is covered by finitely many non-trivial monomial relations $\mathbf{x}^{\mathbf{n}} = 1, \mathbf{n} \neq \mathbf{0}$, and the number and degrees of these monomial relations is bounded by a certain explicit function of $d$ and the maximum degree in any set of polynomials equations defining $X$.
   \end{quote}
  This theorem, also known as ``toral Manin-Mumford,''  has  now  a multitude of different proofs and generalizations, and we refer to Zannier~\cite{zannier} for a thorough discussion. A particularly simple proof of the uniformity statement is in Bombieri and Zannier~\cite{bombierizannier}, also reproduced in chapter~4 of Bombieri and Gubler's book~\cite{bg}. For a sharper quantitative bound on the number of monomial relations, we refer to Aliev and Smyth~\cite{alievsmyth}.

 Theorem~\ref{main} does not however give a new proof of the Laurent-Sarnak theorem, which instead is used in a crucial point of the proof. The argument may be viewed as doing Diophantine approximation to extrapolate from a general structural result about points lying exactly on a subvariety to a general structural result about points lying very near to a subvariety.

An easy consequence of the Laurent-Sarnak theorem is that, for a fixed non-zero Laurent polynomial~$P$ and $\Gamma \subset \Z^d$ running over the finite index subgroups, the number of $\boldsymbol{\zeta} \in \mathbb{G}_m^d[\Gamma] = \{ \mathbf{x}^{\Gamma} = 1\}$ having $P(\boldsymbol{\zeta}) = 0$ is $o(|\mathbb{G}_m^d[\Gamma]|)$, effectively, as $\langle \Gamma \rangle \to \infty$.  This means that a negligible proportion of points is excluded by the sum in (B). Taking now $\Gamma = N \cdot \Z^d$, the Koksma-Hlawka inequality shows that the truncation of the sum to the complement of the $e^{-\varepsilon N}$-tubular neighborhood of the hypersurface $\{P = 0\}$ converges to the integral as $\varepsilon \to 0$ and then $N \to \infty$. A simple estimate shows that not more than $O(N^{d-1})$ of the $N$-torsion points are left behind in the tubular neighborhood (indeed, this even holds for the $1/N$-tubular neighborhood), and hence, applying the trivial Liouville bound on each of the exceptional points, Theorem~\ref{convergence} follows for $d \geq 2$ from  Theorem~\ref{main}. We give the details in the short section~\ref{hlawka} below. With a bit of additional work to treat the case that the zero locus $X$ of $P$ satisfies $X^{\circ} = \emptyset$, the same conclusion can be  obtained by Habegger's Theorem~\ref{hab}, with $\epsilon := 1/2$.
This leads to two independent proofs of Theorem~\ref{convergence} and Silverman's Conjecture~2 in~\cite{silverman}, one of which is treated in full detail by the present paper.

One may wonder whether or not Theorem~\ref{main} should in fact hold for all $\boldsymbol{\zeta} \in \mu_N^d$ having $P(\boldsymbol{\zeta}) \neq 0$, with an empty exceptional set. In this regard it may be worth remarking that Theorem~\ref{main} and its proof extends to a boundedness result on Diophantine approximation by small points of $\mathbb{G}_m^d$, and an exceptional set is definitely required in this generality. See section~\ref{complements} below. The necessity of an exceptional set is well known to be a salient feature already of the uniform Bogomolov problem, as exemplified by Theorem~4.2.3 with remark~4.2.6 in Bombieri and Gubler's book~\cite{bg}, or by  Theorem~1.3  of David and Philippon~\cite{davidphilippon}.

Our method for Theorem~\ref{main} will be that of Thue, Siegel, Roth and Schmidt. The novel point in this context is a use of Cartesian sets for the non-vanishing of the auxiliary construction at the special points.
Of course, non-vanishing at some point of a Cartesian product can be regarded as one of the oldest and simplest zero estimates, going back to Lang's multidimensional extension of the Schneider-Lang theorem. (A genuinely multidimensional Schneider-Lang theorem was subsequently achieved by Bombieri~\cite{bombierival}, without involving a Cartesian hypothesis.) It is also the basis of the polynomial method of Algebraic Combinatorics; see Alon~\cite{alon}.

Nonetheless, this simple idea appears not to have been exploited in the context of a Thue-Siegel method. Such a way of getting non-vanishing of the auxiliary construction at some special point, without using derivations, could also be used for Diophantine approximations over positive characteristic, and leads to a  substitute of Schmidt's Subspace theorem over a global function field even in the case that the divisor (or the linear forms) is not defined over the field of constants. The case of a constant divisor was solved by Julie Wang~\cite{wang} (we discuss this in relation to mixing in the final section~\ref{appo}), whereas for non-constant divisors the Vojta conjectures would appear to fail beyond hope over function fields of positive characteristic. We suggest that a finiteness statement should hold even then, concerning the best possible bound on clusters of sufficiently many solutions with comparable heights. This would be non-trivial in the higher dimensional case, as opposed to the classical situation of Roth's theorem. We refer to Kim, Thakur and Voloch~\cite{thakur} for a discussion of the one-dimensional case.

\smallskip

We hope to devote a subsequent paper to  a weak Subspace theorem over function fields of positive characteristic.

\medskip

{\it An outline, and organization.}
For inductive reasons, it turns out convenient to prove the thesis of Theorem~\ref{main} step by step, inductively from $k = 0$ to $k = d$, by proving the existence of  effectively computable functions $M_k = M_k(d,\varepsilon,\deg{P}) < \infty$ and $N_0 = N_0(d,\varepsilon,\deg{P},h(P)) < \infty$, and a finite union $Z_k = Z_k(d,\varepsilon,\deg{P}) \subsetneq \mathbb{G}_m^d$ of proper torsion cosets $\boldsymbol{\mu}T \subsetneq \mathbb{G}_m^d$, such that $|S| \leq M_k$ holds for
 any set $S \subset \mu_N^d$ of $N$-torsion points, $N \geq N_0$, that satisfies the following: $S \cap Z_k = \emptyset$,
\begin{equation} \label{definin}
 -\log{|P(\boldsymbol{\zeta})|} \geq \varepsilon \phi(N) \quad \textrm{for all } \boldsymbol{\zeta} \in S,
\end{equation}
 and  $S$ maps to a single point under the projection $\mathbb{G}_m^d = \mathbb{G}_m^k \times \mathbb{G}_m^{d-k} \to \mathbb{G}_m^{d-k}$ onto the last $d-k$ coordinates vector. This holds vacuously for $k = 0$, with $M_0 = 1$ and $Z_0 = \emptyset$, and we aim to prove the statement for the maximal (unrestricted) case $k = d$.

 Applying induction on $k$, we assume that $0 < k < d$ and that the statement is true for lower $k$ and all pairs $(P,\varepsilon)$.

 Let us write $\mathbf{x} = (\mathbf{y},\mathbf{z})$, where $\mathbf{y} = (y_1,\ldots,y_{k})$ stands for the first $k$ coordinates and $\mathbf{z} = (z_1,\ldots,z_{d-k})$ stands for the last $d-k$ coordinates.
Introducing a parameter $m$, to be taken sufficiently large with respect to $P$ and $\varepsilon$, and then a further large parameter $D \gg_m 1$, the idea is to create a polynomial identity in $\mathbf{z}$ and in the $m$ blocks $k$ of variables $\mathbf{y}_j := (y_{1,j},\ldots,y_{k,j})$, $j = 1,\ldots,m$:
\begin{eqnarray} \label{lhs}
F := \sum c_{\mathbf{i}_1,\ldots,\mathbf{i}_m,\mathbf{j}} \, \mathbf{y}_1^{\mathbf{i}_1} \cdots \mathbf{y}_m^{\mathbf{i}_m} \mathbf{z}^{\mathbf{j}} \\  = \sum_{r_1 + \cdots + r_m = n} \prod_{j=1}^m P(\mathbf{y}_j,\mathbf{z})^{r_j} \cdot Q_{\mathbf{r}}(\mathbf{y}_1,\ldots,\mathbf{y}_m,\mathbf{z}), \label{rhs}
\end{eqnarray}
  with  partial degrees less than $D$ in all $x_{i,j}$ and $z_s$, with order of vanishing at least
  \begin{equation} \label{order}
  n \geq \frac{mD}{C}, \quad \quad \quad \quad C = C(P) < \infty
  \end{equation}
   along the variety $P(\mathbf{y}_1,\mathbf{z}) = \cdots = P(\mathbf{y}_m,\mathbf{z}) = 0$, and with integral coefficients $c_{\mathbf{i}_1,\ldots,\mathbf{i}_m,\mathbf{j}} \in \Z$ having absolute values bounded by $e^{o(Dm)}$ as $m \to \infty$.

    Notice that simply raising $P$ to a power leads to coefficients with exponential size. The crucial subexponential estimate is obtained by a typical application of the Thue-Siegel lemma for a linear system in the unknown coefficients $c_{\mathbf{i}_1,\ldots,\mathbf{i}_m,\mathbf{j}}$, an application that  requires having $m \to \infty$ as $\varepsilon \to 0$. This is carried out in section~\ref{construction}, by separating a variable. We  note that in the basic  case of $m = 1$ and a single variable polynomial $P$, and most particularly for $P(x) = x-1$, much more refined constructions via this technique appear in the literature, see Bombieri and Vaaler~\cite{bombierivaaler}.

With this construction at hand,  consider now an $m$-tuple
$$
(\boldsymbol{\zeta}_1,\ldots,\boldsymbol{\zeta}_m) \in S \times \cdots \times S
$$
 of $N$-torsion points in $S$. Let $G_N$ denote the Galois group of the level $N$ cyclotomic field $C_N$.
 By assumption, all these points $\boldsymbol{\zeta}_j$ have a common vector of last $d-k$ coordinates. We wish  to specialize the right-hand side~(\ref{rhs}) of the identity to   $\mathbf{x}_j = (\mathbf{y}_j,\mathbf{z}) = \boldsymbol{\zeta}_j$ for  $\sigma = 1$, and the left-hand side~(\ref{lhs}) to all $\mathbf{x}_j = \boldsymbol{\zeta}_j^{\sigma}$ for $\sigma \neq 1$. If $F(\boldsymbol{\zeta}_1,\ldots,\boldsymbol{\zeta}_m) \neq 0$, then $\prod_{\sigma \in G_N} F(\boldsymbol{\zeta}_1^{\sigma},\ldots,\boldsymbol{\zeta}_m^{\sigma}) = N_{C_N/\Q}(F(\boldsymbol{\zeta}_1,\ldots,\boldsymbol{\zeta}_m)) \in \Z$ is a non-zero rational integer, hence at least~$1$ in absolute value. On taking $m,D \gg_{C,\varepsilon} 1$ and then  $N \gg_{F,\varepsilon} 1$ this will imply
   \begin{equation} \label{des}
   -\log{|P(\boldsymbol{\zeta}_j)|} \leq \varepsilon |G_N| = \varepsilon \phi(N)
   \end{equation}
   for at least one $j$, contradicting the defining assumption of the set $S$.

Suppose then that all points of $S \times \cdots \times S$ are constrained by the non-zero algebraic relation $F(\boldsymbol{\zeta}_1,\ldots,\boldsymbol{\zeta}_m) = 0$, that depends on $d, \varepsilon$ and the original $P$, but not on $N$. By the Laurent-Sarnak structural theorem (``Lang's $\mathbb{G}_m$ conjecture''), the set of all such $m$-tuples is covered by finitely many non-trivial multiplicative relations $\prod_{j=1}^m \boldsymbol{\zeta_j}^{\mathbf{r}_j} = 1$. Those relations that do not depend on the $\mathbf{y}_j$ variables must involve the $\mathbf{z}$ variable with a non-zero multi-index in $\Z^{d-k} \setminus \{0\}$. On enlarging $Z_{k-1}$ to a $Z_k$ by adding all these multiplicative $\mathbf{z}$ relations, we may assume that no point of $S$ fulfils any of the $(\mathbf{y}_1,\ldots,\mathbf{y}_m)$-independent relations. Then all points of the Cartesian power $S \times \cdots \times S$ are covered by finitely many fixed multiplicative relations, each of which involves at least one of the $y$-variables.
A positive proportion of points in $S \times \cdots \times S$ will belong to a fixed such relation, and a monoidal change of the $\mathbf{y}$-coordinates brings us easily within the scope of the induction hypothesis.

\section{The auxiliary construction}  \label{construction}

 It will be convenient to consider the notion of size on a logarithmic scale. Hence, for non-zero $P \in \Z[x_1,\ldots,x_d]$ (we may assume for Theorem~\ref{main} that $P$ is a polynomial), we write $h(P)$ for the maximum of the logarithms of absolute values of coefficients of $P$.

We write $\mathbf{y}_j$ for the length $k$  variables
 block $(y_{1j},\ldots,y_{kj})$ and $\mathbf{z}$ for the length $d-k$ variables $(z_1,\ldots,z_{d-k})$ and consider the ideal $\mathcal{I}_m \subset \Z[\mathbf{y}_1,\ldots,\mathbf{y}_m,\mathbf{z}]$ generated by the polynomials $P(\mathbf{y}_1,\mathbf{z}),\ldots,P(\mathbf{y}_m,\mathbf{z})$. The goal of this section is the following

\begin{propo} \label{poly} Assume that $k \in \{1,\ldots,d\}$ and that all non-constant factors of $P(\mathbf{y},\mathbf{z})$ are non-constant in $y_1$. Then there  is a constant $C = C(P) < \infty$, depending only on our polynomial $P$, such that the following is true. For all  $m \in \N$ and $D \gg_{m} 1$, there is a non-zero integer polynomial $F \in \Z[\mathbf{y}_1,\ldots,\mathbf{y}_m,\mathbf{z}]$ having all partial degrees $\deg_{y_{ij}}(F), \, \deg_{z_{s}}(F) < D$  and:
\begin{itemize}
\item[(i)] $\log{h(F)} \leq - m + \log{(CmD)}$;
\item[(ii)] for some $n \geq mD/C$, the polynomial $F$ belongs to the ideal $\mathcal{I}_m^n$.
\end{itemize}
\end{propo}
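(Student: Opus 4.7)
My plan is to construct $F$ via Siegel's lemma applied to the integer linear system that expresses the vanishing condition $F \in \mathcal{I}_m^n$, with the main technical device being a separation of the variable $y_{1,j}$ in each block, made possible by the hypothesis $\delta := \deg_{y_{1}} P \geq 1$ that is guaranteed by our assumption on the factors of $P$. Write $\mathbf{y}_j = (y_{1,j},\mathbf{y}_j')$ and let $p_\delta(\mathbf{y}_j',\mathbf{z})$ be the leading coefficient of $P$ in $y_1$. Upon inverting $p_\delta$, Euclidean division by $P(\mathbf{y}_j,\mathbf{z})$ in $y_{1,j}$ in each block yields the unique normal-form expansion
\[
F = \sum_{\mathbf{a}\in[0,\delta)^m,\ \mathbf{b}\in\N^m} c_{\mathbf{a},\mathbf{b}}(\mathbf{y}',\mathbf{z})\, y_{1,1}^{a_1}\cdots y_{1,m}^{a_m}\, P(\mathbf{y}_1,\mathbf{z})^{b_1}\cdots P(\mathbf{y}_m,\mathbf{z})^{b_m},
\]
and $F \in \mathcal{I}_m^n$ (over $\Q$) if and only if $c_{\mathbf{a},\mathbf{b}} = 0$ for every $|\mathbf{b}| < n$. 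The partial-degree requirement $\deg_{y_{1,j}} F < D$ restricts $0 \leq b_j < D/\delta$ in this expansion.

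After clearing a uniformly bounded power of $\prod_j p_\delta$, the vanishing conditions $c_{\mathbf{a},\mathbf{b}} = 0$ for $|\mathbf{b}| < n$ become an integer linear system on the $N := D^{km+d-k}$ coefficients of $F$, with matrix entries bounded by $\exp\bigl(C_1(P)\,mD\bigr)$ for a constant $C_1(P)$ absorbing $h(P)$ and $\log\deg P$. I then pick $n := \lceil mD/(C_0\delta)\rceil$ for a sufficiently large absolute constant $C_0$. A Chernoff-type large-deviation estimate on a uniformly distributed $\mathbf{b} \in [0, D/\delta)^m$, whose coordinate sum has mean $\sim mD/(2\delta)$, yields $\#\{|\mathbf{b}| < n\} \leq e^{-m}(D/\delta)^m$ for $C_0$ large enough; multiplying by the remaining $\delta^m$ factor coming from $\mathbf{a}$ and by the $O(D)^{(k-1)m+d-k}$ count of $(\mathbf{y}',\mathbf{z})$-monomials in the $c_{\mathbf{a},\mathbf{b}}$ gives the codimension bound $M \leq Ne^{-m}$. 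Siegel's lemma now supplies a nonzero integer $F$ in the kernel with
\[
h(F) \;\leq\; \frac{M}{N-M}\,\log(N\|A\|_\infty) \;\leq\; \frac{e^{-m}}{1-e^{-m}}\cdot O_P(mD) \;=\; O_P\!\bigl(mD\,e^{-m}\bigr),
\]
which is the claimed inequality $\log h(F) \leq -m + \log(CmD)$. The order of vanishing of $F$ along the variety $P_1 = \cdots = P_m = 0$ is $n \geq mD/C$ with $C = C_0\delta$.

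The principal obstacle is the concentration estimate in the counting step: it is the \emph{exponentially}-in-$m$ small codimension ratio $M/N \leq e^{-m}$, and not merely a polynomial smallness, that is responsible for the subexponential-in-$m$ bound on $h(F)$ needed in the subsequent contradiction argument from the outline. A secondary but genuine issue concerns \emph{integrality}: the normal-form expansion lives over the localization $\Z[\mathbf{y}',\mathbf{z}][\prod_j p_\delta^{-1}]$, whereas $\mathcal{I}_m^n$ is an ideal inside $\Z[\mathbf{y}_1,\ldots,\mathbf{y}_m,\mathbf{z}]$. This can be handled by clearing denominators at the level of the integer linear system, so that the Siegel solution $F$ lies in $\mathcal{I}_m^n$ over $\Q$; this is all that is needed for the subsequent product-formula step, since the rational polynomials $Q_{\mathbf{r}}$ in the representation $F = \sum_{|\mathbf{r}|=n}\prod_j P(\mathbf{y}_j,\mathbf{z})^{r_j}Q_{\mathbf{r}}$ have denominators that are bounded powers of $p_\delta$ and contribute only a fixed amount to the height, absorbed into $C(P)$.
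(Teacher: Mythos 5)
Your approach is the paper's approach: isolate the $y_1$-leading variable using the hypothesis on the factors of $P$, pass to a Euclidean/normal-form expansion in each $y_{1,j}$, impose the vanishing of the low-$|\mathbf{b}|$ normal-form coefficients as a linear system, count those constraints with an exponential-moment (Chernoff) estimate as in Lemma~\ref{volume}, and invoke Siegel's lemma. The counting, the choice $n \asymp mD/C$, and the resulting bound $\log h(F) \leq -m + \log(CmD)$ all line up with the paper, because your system (after clearing denominators) is the same as the paper's system for $Q = \prod_j R(\mathbf{y}_j',\mathbf{z})^D \cdot F$.

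The one genuine gap is at the very end, in the integrality step you flag and then dismiss. Your vanishing conditions $c_{\mathbf{a},\mathbf{b}} = 0$ for $|\mathbf{b}| < n$ live over the localization $\Z[\mathbf{y}',\mathbf{z}][\prod_j p_\delta^{-1}]$, so they place $F$ in the extension of $\mathcal{I}_m^n$ to that localized ring --- equivalently, they give a representation $F = \sum P^{\mathbf{r}} Q_{\mathbf{r}}$ in which the $Q_{\mathbf{r}}$ may have $\prod_j p_\delta(\mathbf{y}_j',\mathbf{z})$-denominators. Your closing sentence asserts this is harmless because those denominators "contribute only a fixed amount to the height, absorbed into $C(P)$,'' but that is false pointwise: $p_\delta$ can have zeros near (or at) torsion points on $(S^1)^d$, so $|p_\delta(\boldsymbol{\zeta})|^{-M}$ is not uniformly bounded, and the estimate $|F(\boldsymbol{\zeta})| \leq \sum_{\mathbf{r}} |P(\boldsymbol{\zeta})|^{|\mathbf{r}|} |Q_{\mathbf{r}}(\boldsymbol{\zeta})|$ needed for~(\ref{sumdexc}) does not follow. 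It is also internally inconsistent to assert both "$F$ lies in $\mathcal{I}_m^n$ over $\Q$'' and "the $Q_{\mathbf{r}}$ have $p_\delta$-denominators''; the first claim, which is what the application actually needs, is the one that must be proved. The paper closes this gap with a short but essential commutative-algebra step: one has $\prod_j R(\mathbf{y}_j',\mathbf{z})^D \cdot F \in \mathcal{I}_m^n$, none of the $R(\mathbf{y}_j',\mathbf{z})$ lies in any associated prime of $\mathcal{I}_m$ (by the coprimality of $R$ with the $R_s$ and their independence of $y_1$), and --- since the $P(\mathbf{y}_j,\mathbf{z})$ form a regular sequence --- the associated primes of $\mathcal{I}_m^n$ agree with those of $\mathcal{I}_m$, so $R$ is a non-zero-divisor modulo $\mathcal{I}_m^n$ and the $R$-factors cancel, yielding $F \in \mathcal{I}_m^n$ with honest polynomial $Q_{\mathbf{r}} \in \Z[\mathbf{y}_1,\ldots,\mathbf{y}_m,\mathbf{z}]$. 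You need this non-zero-divisor argument (or an equivalent one) to complete the proof.
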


\medskip

We will need only the simplest possible version of Siegel's lemma; this is Lemma~2.9.1 in~\cite{bg}.

{\bf Siegel's lemma. } {\it Let $A$ be an $K \times L$ matrix, $L > K$, whose entries are rational integers bounded in magnitude by $B$. Then the homogeneous linear system $A \cdot \mathbf{x} = 0$ has a non-zero solution $\mathbf{x} = (x_1, \ldots, x_L) \in \Z^L$, satisfying
$$
h(\mathbf{x}) :=  \max_i{\log{|x_i|}} \leq \frac{K}{L-K} \log{L} + \frac{K}{L-K} \log{B}.
$$ \proofend}

\medskip

The next type of lemma,  of  probabilistic nature, is familiar from the parameter count in the proof of Roth's theorem (cf.~Bombieri-Gubler~\cite{bg}, Lemma~6.3.5).

\begin{lemma} \label{volume} The polytope
$$
E_m(A) :  \quad \{ \mathbf{t} \mid t_1 + \cdots + t_m < m/A, \quad 0 \leq t_1,\ldots,t_m \leq 1  \} \subset \R^{m}
$$
has volume satisfying
$$
\mathrm{vol}\big(E_m(A) \big) < (e/A)^m.
$$
\end{lemma}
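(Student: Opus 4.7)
The plan is to drop the upper-bound constraints $t_i \le 1$ and bound the resulting simplex, since this already gives the desired estimate with room to spare.

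Concretely, first I would observe that
\[
E_m(A) \subseteq S_m(A) := \{\mathbf{t} \in \R^m \mid t_1,\ldots,t_m \geq 0,\ t_1 + \cdots + t_m \leq m/A\},
\]
so it suffices to bound $\mathrm{vol}(S_m(A))$. The standard simplex $S_m(A)$ is the dilate by $m/A$ of the unit simplex $\{\mathbf t \geq \mathbf 0,\ \sum t_i \leq 1\}$, whose volume is $1/m!$, and hence
\[
\mathrm{vol}\bigl(S_m(A)\bigr) = \frac{(m/A)^m}{m!}.
\]

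Next I would invoke the elementary lower bound $m! \geq (m/e)^m$, which follows at once from the Taylor series $e^m = \sum_{k \geq 0} m^k/k! \geq m^m/m!$ (no need for Stirling). Substituting into the previous display yields
\[
\mathrm{vol}\bigl(E_m(A)\bigr) \leq \mathrm{vol}\bigl(S_m(A)\bigr) \leq \frac{(m/A)^m}{(m/e)^m} = \left(\frac{e}{A}\right)^m,
\]
and the inequality is strict because $E_m(A)$ is cut out by a strict inequality (or, alternatively, because the bound $m! \geq (m/e)^m$ is strict for every $m \geq 1$).

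There is no genuine obstacle here; the only mild subtlety is whether to keep the $t_i \leq 1$ constraints or discard them, and since the proposition is applied in a regime where $A$ grows while $m$ is bounded (so that the simplex already lies inside the unit cube in the relevant range), discarding them loses nothing qualitatively and keeps the proof a one-line combination of the simplex volume formula with the Taylor bound on $m!$.
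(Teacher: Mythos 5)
Your proof is correct, but it takes a different route from the paper's. You drop the box constraints $t_i\le 1$, compute the exact volume $(m/A)^m/m!$ of the resulting simplex, and finish with the elementary bound $m!>(m/e)^m$. The paper instead uses a Chernoff-type tilting argument: it dominates the indicator of $\{t_1+\cdots+t_m<m/A\}$ by $\exp(-A(t_1+\cdots+t_m-m/A))$ after a suitable translation, and then factors the resulting integral over the box into a product of $m$ one-dimensional integrals each $<e/A$. Your argument is shorter and more self-contained for this particular constant; the paper's exponential-moment method is the standard tool in the Roth/Thue--Siegel parameter count (compare Bombieri--Gubler, Lemma~6.3.5) and has the virtue of adapting painlessly to asymmetric or weighted constraints where the sublevel set is not simply a dilated simplex. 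One minor side remark: you suggest the lemma is applied ``where $A$ grows while $m$ is bounded,'' but in the paper it is the opposite --- $A=C$ is fixed in terms of $P$, and $m$ is taken large as $\varepsilon\to 0$. This has no bearing on your proof, which is valid for all $m$ and $A$, but the regime comment is inverted.
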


{\it Proof. }   For any $\lambda > 0$, the characteristic function of $\{ t < 0\}$ is bounded by $e^{-\lambda t}$; let us choose $\lambda = A$. Since the condition $t_1 + \cdots + t_m < m/A$ takes the expression $t_1'+\cdots+t_m' + m/A < 0$ in the coordinates $t_i':=t_i-2/A$, we see that $\mathrm{vol}(E_m)$ is bounded by the integral of $\exp(-A(t_1' + \cdots + t_m'+m/A))$ over the box $\mathbf{t}' \in [-2/A,1]^m$. This is just the $m$-th power of the one-dimensional integral
$$
\int_{-\frac{2}{A}}^{1} e^{-A(t+1/A)} \, dt = \frac{1}{eA} \int_{-2}^A e^{-u} \, du < \frac{e}{A},
$$
proving our claim.
\proofend

\medskip

The construction now proceeds as follows. By our assumption that $k \geq 1$ and all non-constant factors of $P(\mathbf{y},\mathbf{z})$ are non-constant in $y_1$ we have, writing $\mathbf{y}' := (y_2,\ldots,y_k)$:
\begin{equation} \label{sepvar}
P(\mathbf{x}) = R(\mathbf{y}',\mathbf{z}) y_{1}^r - \sum_{s=0}^{r-1} R_s(\mathbf{y}',\mathbf{z})y_1^s, \quad R, R_s \in \Z[\mathbf{y}',\mathbf{z}],
\end{equation}
where $R \neq 0$ and the $R_s$ are integer  polynomials without  common non-constant factor.

  We look for a  polynomial
   $$
   F(\mathbf{y}_1,\ldots,\mathbf{y}_m,\mathbf{z}) = \sum c_{\mathbf{i}_1,\ldots,\mathbf{i}_m,\mathbf{j}} \, \mathbf{y}_1^{\mathbf{i}_1}\cdots \mathbf{y}_m^{\mathbf{i}_m} \mathbf{z}^{\mathbf{j}} \in \Z[\mathbf{y}_1,\ldots,\mathbf{y}_m,\mathbf{z}]
   $$
    in $km + (d-k)$ variables $y_{tj}$, $t = 1,\ldots,k, \, j = 1,\ldots,m$ and $z_1, \ldots, z_{d-k}$, in which all components  of the multi-indices $\mathbf{i}_s \in \N_0^k$ and $\mathbf{j} \in \N_0^{d-k}$  take values in $\{0,1,\ldots,D-1\}$.
      Then the polynomial
    \begin{equation} \label{asso}
  Q(\mathbf{y}_1,\ldots,\mathbf{y}_m,
\mathbf{z}) := R(\mathbf{y}_1',\mathbf{z})^{D} \cdots R(\mathbf{y}_m',\mathbf{z})^{D} F(\mathbf{y}_1,\ldots,\mathbf{y}_m,\mathbf{z})
    \end{equation}
    still has its partial degrees in the ``first variables'' $y_{1j}$ smaller  than $D$.

     Consider the following finite procedure applied iteratively starting from~$Q$:

      \begin{quote}
      {\it At each stage, every occurrence of an $y_1^r$ will be accompanied by a corresponding $R(\mathbf{y}',\mathbf{z})$. Replace $R(\mathbf{y}',\mathbf{z})y_d^r$ by $P(\mathbf{y},\mathbf{z}) + \sum_{s=0}^{r-1} R_s(\mathbf{y}',\mathbf{z})y_1^s$, and repeat until the partial degrees in the $y_1$ are all $< r$.}
      \end{quote}

     Owing to the exponential boundedness of linear recursions and multinomial coefficients, at the end of our procedure we have presented $Q$ in the form
     \begin{equation} \label{expan}
    Q = \sum_{j=1}^m \sum_{s_j=0}^{r-1} \sum_{r_j =0}^{D-1} \sum_{\mathfrak{I}}   \sum_{\mathbf{j}} l_{\mathfrak{I},\mathbf{j},\mathbf{r},\mathbf{s}} \mathbf{x}'^{\mathfrak{I}} \mathbf{z}^{\mathbf{j}} F(\mathbf{x}_1)^{r_1} \cdots F(\mathbf{x}_m)^{r_m}  y_{11}^{s_1} \cdots y_{1m}^{s_m},
     \end{equation}
     where the $ l_{\mathfrak{I},\mathfrak{j},\mathbf{r},\mathbf{s}}$ are linear forms in the $c_{\mathbf{i}_1,\ldots,\mathbf{i}_m,\mathbf{j}}$ with coefficients rational integers of logarithmic size $\ll_P mD$.
     Here, $\mathfrak{I} := (\mathbf{i}_1',\ldots,\mathbf{i}_m')$ ranges over $m$-tuples $\mathbf{i}' \in \{0,1,\ldots, 2D-1\}^{k-1}$; $\mathbf{j}$ ranges over $\{0,1,\ldots,2D-1\}$; $\mathbf{x}_j$ as before stands for the block $(\mathbf{y}_j,\mathbf{z})$, and similarly $\mathbf{x}_j' := (\mathbf{y}_j',\mathbf{z})$; and  $\mathbf{x}'^{\mathfrak{I}}$ is an abbreviation for $\mathbf{x}_1'^{\mathbf{i}_1'} \cdots \mathbf{x}_m'^{\mathbf{i}_m'}$.

   \medskip

{\it Proof of Proposition~\ref{poly}. } Clearly, we may assume the co-primality of the coefficients of $F$, and hence that $\mathrm{gcd}(R,R_0,\ldots,R_{r-1})$ is the unit ideal of $\Z[\mathbf{x}]$ in (\ref{sepvar}). We apply Siegel's lemma to  the linear system
\begin{equation} \label{siegeleq}
l_{\mathfrak{I},\mathbf{j},\mathbf{r},\mathbf{s}} = 0, \quad r_1 + \cdots + r_m < n
\end{equation}
in the unknowns $c_{\mathbf{i}_1,\ldots,\mathbf{i}_m,\mathbf{j}}$. As each $\mathbf{i}_t$ ranges over $\{0,\ldots,D-1\}^k$ and $\mathbf{j}$ ranges over $\{0,\ldots,D-1\}^{d-k}$, the number of free parameters is $L := D^{km + (d-k)}$. We use Lemma~2 to estimate the number $K$ of equations~(\ref{siegeleq}). We will take $n := mD/C + O(1)$ with a suitable sufficiently large constant $C = C(P) < \infty$ depending only on $P$. Then, in the asymptotic that $D \to \infty$ ($m$ is fixed), the number of lattice points in the dilation by $D$ of the polytope $E_m(C)$ is $\sim D^m \mathrm{vol} \big( E_m(C) \big) < (e/C)^m D^m$. The multi-index $\mathfrak{J}$ takes $(2D)^{(k-1)m}$ possibilities, $\mathbf{j}$ takes $(2D)^{d-k}$ possibilities, and $\mathbf{r} = (r_1,\ldots,r_m)$ takes $r^m$ possibilities. We conclude that for $D \gg_{m,C} 1$ large, there are fewer than
$$
K < (er\cdot 2^{d-1}/C)^m D^{km + (d-k)} = (er\cdot 2^{d-1}/C)^m L
$$
equations to solve.

We choose
\begin{equation} \label{firstchc}
C \geq  e^2r \cdot 2^{d-1},
\end{equation}
making
\begin{equation} \label{equations}
K < e^{-m}L.
\end{equation}
The Dirichlet exponent of our linear system is $K/(L-K) < 2e^{-m}$, while the logarithmic size $\log{B}$ is bounded by $\ll_P mD$. Consequently, Siegel's lemma supplies a non-zero solution $F \in \Z[\mathbf{y}_1,\ldots,\mathbf{y}_m,\mathbf{z}]$, of logarithmic size $h(F)$ satisfying
$$
\log{h(F)} \leq -m + \log(mD) + O_P(1),
$$
such that~(\ref{siegeleq}) hold in~(\ref{expan}) with $n \geq md/C$. On augmenting the choice of $C$ in~(\ref{firstchc}) to subsume the $O_P(1)$ constant, we gain~(i).

For (ii), we have insured that the associated polynomial $Q$ in (\ref{asso}) lies in $\mathcal{I}_m^n$. It remains to prove that this in fact implies the {\it a priori} stronger conclusion $F \in \mathcal{I}_m^n$. For this, noting the co-primality of $R$ with the $R_s$ in~(\ref{sepvar}) and that none of these polynomials involves $y_1$, it suffices to remark that none of the $R(\mathbf{y}_j',\mathbf{z})$  belong to any of the associated primes of the ideal $\mathcal{I}_m$. \proofend

\section{Non-vanishing of the auxiliary construction \\ at some special point}

We fix an $\varepsilon > 0$ and set out to prove by induction on $k \in \{0,\ldots,d\}$ the successively stronger statements that there exists a finite union  $Z_k(P,\varepsilon)$ of non-trivial multiplicative relations $\mathbf{x}^{\mathbf{n}} = 1$, $\mathbf{n} \neq \mathbf{0}$, whose number and degrees $n_1+\cdots+n_d$ are bounded by a computable function of $d,\varepsilon$, and $\deg{P}$ alone, such that the following is true for every $N \gg_{k,d,\varepsilon,\deg{P},h(P)} 1$:
\begin{quote}
  If all elements of a set $S \subset \mu_N^d$ of $N$-torsion points satisfy (\ref{definin}) but not any of the multiplicative relations from $Z_k(P,\varepsilon)$, and share a common vector for their last $d-k$ coordinates, then $|S| \leq M_k(d,\varepsilon,\deg{P})$ is bounded by a computable function of $k,d,\varepsilon$ and $\deg{P}$.
  \end{quote}

This trivially holds for $k = 0$, with $M_0 = 1$ and $Z_0(P,\varepsilon) = \emptyset$. We proceed to the induction step, taking $k > 0$ and assuming the statement holds for lower values of $k$.
Choose $m$ in Proposition~\ref{poly} to be large enough subject to
\begin{equation}  \label{choicem}
e^{-m} < \varepsilon/(4C^2)
 \end{equation}
 and fix it. By our assumption that the components of $\{P = 0\} \subset \mathbb{G}_m^d$ map onto under all surjective homomorphisms $\mathbb{G}_m^d \twoheadrightarrow \mathbb{G}_m^{d-1}$, all non-constant factors of the polynomial $P(\mathbf{y},\mathbf{z})$ are non-constant in the variable $y_1$, and hence Proposition~\ref{poly} applies.  Fixing then a $D \gg_m 1$ as in the proposition, assumed for later reference to satisfy
  \begin{equation} \label{choiced}
  dm\log{D} < \frac{\varepsilon}{4C} mD,
  \end{equation}
  we obtain a non-zero polynomial $F \in \Z[\mathbf{y}_1,\ldots,\mathbf{y}_m,\mathbf{z}]$ of partial degrees bounded by $D$ and fulfilling (i) and (ii) of Proposition~\ref{poly}.

Assume first that  $F$ vanishes, for some $N$ and $S$ as above, at all points of the Cartesian power $S \times \cdots \times S$. The Laurent-Sarnak structural theorem then implies that each point of this Cartesian power fulfils one of finitely many non-zero monomial relations
$$
R: \quad \mathbf{y}_1^{\mathbf{n}_1} \cdots \mathbf{y}_m^{\mathbf{n}_m} \mathbf{z}^{\mathbf{t}} = 1, \quad (\mathbf{n}_1,\ldots,\mathbf{n}_m,\mathbf{t}) \in \Z^{km+(d-k)} \setminus \{\mathbf{0}\}.
$$
We will include in $Z_k(P,\varepsilon)$ at least the union $Z'$ of the finitely many non-zero multiplicative relations $\mathbf{z}^{\mathbf{t}} = 1$ (or $\mathbf{x}^{(\mathbf{0},\mathbf{t})} = 1$ in terms of $\mathbf{x} = (\mathbf{y},\mathbf{z})$)  from $R$ that have $\mathbf{n}_1 = \cdots = \mathbf{n}_m = \mathbf{0}$ (so necessarily $\mathbf{t} \neq \mathbf{0}$). This allows us to assume that the Cartesian power $S \times \cdots \times S$ is covered by finitely many relations
\begin{equation} \label{correl}
\mathbf{y}_1^{\mathbf{n}_1} \cdots \mathbf{y}_m^{\mathbf{n}_m} = \xi_0, \quad (\mathbf{n}_1,\ldots,\mathbf{n}_m) \in \Z^{km} \setminus \{\mathbf{0}\},
\end{equation}
 where $\xi_0 \in \mu_N$ are $N$-th roots of unity while $(\mathbf{n}_1, \ldots, \mathbf{n}_m)$ are non-zero vectors in $\Z^{km}$ that depend on $F$ and not on $N$. Moreover, the number and degrees of these relations can be taken to be bounded by a universal function $B(d,m,\deg{F})$.

 By the pigeonhole principle, one of the relations (\ref{correl}) is fulfilled by at least $|S|^m / B(d,m,\deg{F})$ of the $|S|^m$ points of the Cartesian power. If $\mathbf{y}_1^{\mathbf{n}_1} \cdots \mathbf{y}_m^{\mathbf{n}_m} = \xi_0$ is this relation and $j_0$ an index where $\mathbf{n} = \mathbf{n}_{j_0} \neq \mathbf{0}$, the pigeonhole principle again implies that for some point $\mathbf{p} \in \prod_{j \neq j_0} S$, the set of solutions to $\mathbf{y}_1^{\mathbf{n}_1} \cdots \mathbf{y}_m^{\mathbf{n}_m} = \xi_0$ in $S \times \cdots \times S$ projecting to $\mathbf{p}$ has cardinality at least $\lfloor |S| / B(d,m,\deg{F}) \rfloor$.  Specializing the $j \neq j_0$ coordinates to $\mathbf{p}$, this then gives the existence of another $N$-th root of unity, $\xi \in \mu_N$, such that the fixed non-zero multiplicative relation $\mathbf{y}^{\mathbf{n}} = \xi$ is fulfilled by the vectors of first $k$ coordinates $\mathbf{y}$ of all points in a subset $S' \in S$ of cardinality at least $|S'| \geq \lfloor |S| / B(d,m,\deg{F}) \rfloor$.

Upon relabeling the coordinates $y_1, \ldots, y_k$, we may assume $n_k \neq 0$ and let $u_1 := y_1, \ldots, u_{k-1} := y_{k-1}$ and $u_k := y_1^{n_1} \cdots y_k^{n_k}$. Then
\begin{eqnarray*}
G:= \prod_{\mu: \, \mu^{n_k} = 1} P(y_1, \ldots, y_{k-1},\mu y_k,\mathbf{z}) \\ \in \Z[y_1^{\pm 1}, \ldots,y_{k-1}^{\pm 1};y_k^{\pm n_k};z_1^{\pm 1}, \ldots, z_{d-k}^{\pm 1}]
\end{eqnarray*}
is a non-zero integer Laurent polynomial in  $y_1 = u_1, \ldots,  y_{k-1} = u_{k-1}$, $y_k^{n_k} = u_k \cdot u_1^{-n_1} \cdots u_{k-1}^{-n_{k-1}}$ and $\mathbf{z}$, and hence a non-zero integer Laurent polynomial in $u_1,\ldots,u_k$ and $\mathbf{z}$. By construction, all points $\boldsymbol{\zeta} \in S'$ satisfy $u_k = \xi$ and
\begin{eqnarray} \label{gee}
 -\log{|G(\boldsymbol{\zeta})|} \geq  -\log{|P(\boldsymbol{\zeta})|} + O_F(1) \geq \varepsilon \phi(N) /2,
\end{eqnarray}
for $N \gg_{k,d,\varepsilon,\deg{P},h(P)} 1$, with implied constant depending effectively on $k,d, \varepsilon,m,C$ and $D$, and hence ultimately on $k, d,\varepsilon, \deg{P}$ and $h(P)$.

 The points in $S'$ thus fulfil (\ref{gee}) and have a common vector for their last $d-k+1$ coordinates. Our assumption on the non-degeneracy of the components under homomorphisms $\mathbb{G}_m^d \twoheadrightarrow \mathbb{G}_m^{d-1}$ again implies that $G(\boldsymbol{u},\mathbf{z})$ has all its non-constant irreducible factors non-constant in the variable $u_1$.  Applying the induction hypothesis with $P$ replaced by the expression of $G$ in the $(\mathbf{u},\mathbf{z})$ coordinates, and with $\varepsilon/2$ in place of $\varepsilon$, we define $Z_k(P,\varepsilon) := Z_{k-1}(G,\varepsilon/2) \cup Z'$ and conclude that $|S'|$ is bounded by a computable function of $d,\varepsilon$ and $\deg{G}$, and hence by a computable function of $d,\varepsilon$ and $\deg{P}$. Hence the same is true for $|S| \leq 2B(d,m,\deg{F}) \cdot |S'| \ll_{d,\varepsilon,\deg{P}} |S'|$.

 This brings the vanishing assumption to the range of the induction hypothesis,
   and hence allows us to assume $F(\boldsymbol{\zeta}_1,\ldots,\boldsymbol{\zeta}_m) \neq 0$ for some $\boldsymbol{\zeta}_1, \ldots, \boldsymbol{\zeta}_m \in S$.

 \section{Proof of Theorems~\ref{main} and~\ref{subvar}}  \label{finalarg}

  For Theorem~\ref{main}, we are  reduced to proving that at least one of $\boldsymbol{\zeta}_1, \ldots, \boldsymbol{\zeta}_m \in \mu_N^d$ has to satisfy~(\ref{des}) if $F(\boldsymbol{\zeta}_1,\ldots,\boldsymbol{\zeta}_m) \neq 0$ and $N \gg_{P,\varepsilon} 1$. We argue by contradiction.

  Writing $G_N$ for the Galois group of $\Q(\boldsymbol{\zeta})/\Q$ and noting that the number of terms in $F$ is bounded by $D^{dm}$, clause (i) in Proposition~\ref{poly} along with our choices~(\ref{choicem}) and (\ref{choiced}) insure the bound
\begin{equation}  \label{upperb}
\log{|F(\boldsymbol{\zeta}^{\sigma})|} \leq dm\log{D} + e^{-m}CmD < \varepsilon \, \frac{mD}{2C}, \quad \textrm{ all } \sigma \in G_N.
\end{equation}
We  use this bound for $\sigma \neq 1$, while for $\sigma = 1$ we use instead the right-hand side of the identity~(\ref{rhs}) and the defining assumption that $-\log{|P(\boldsymbol{\zeta}_j)|} \leq \varepsilon |G_N|$ for all $j = 1,\ldots,m$:
\begin{equation} \label{sumdexc}
 -\log{|F(\boldsymbol{\zeta})|} \geq \varepsilon \frac{mD}{C} \phi(N) - O_{C,m,D}(1).
\end{equation}

We may now conclude the proof of Theorem~\ref{main}. Since $\prod_{\sigma \in G_N} F(\boldsymbol{\zeta}^{\sigma}) \in \Z \setminus \{0\}$ is a non-zero rational integer, we have $\sum_{\sigma \in G_N} - \log{|F(\boldsymbol{\zeta}^{\sigma})|} \leq 0$. Summing the negative of~(\ref{upperb}) over $\sigma \in G_N \setminus \{1\}$ and~(\ref{sumdexc}), we reach a contradiction as soon as $N \gg_{\varepsilon,C,m,D} 1$. \proofend

\smallskip

We now derive Theorem~\ref{subvar} from Theorem~\ref{main}. It suffices to prove the same statement for algebraic subsets $X \subset \mathbb{G}_m^d$ (over $\Z$). There is a finite set of non-zero integer polynomials $P_1, \ldots, P_r \in \Z[\mathbf{x}]$ having $X$ as their common zero locus. We apply a double induction, first on $d$ and then on $\sum_{i=1}^r \deg{P_i}$, with the base case $d = 1$  holding trivially, and the base case $\sum_{i=1}^r \deg{P_i} = 0$ being vacuous. For the induction step inside $\mathbb{G}_m^d$, suppose first that the polynomial $P_1$ meets the condition of Theorem~\ref{main}, of having all its components map onto under all surjective homomorphisms $\mathbb{G}_m^d \twoheadrightarrow \mathbb{G}_m^{d-1}$. In that case, as $-\log{\mathrm{dist}(\boldsymbol{\zeta},X)} \leq - \log{|P_1(\boldsymbol{\zeta})|} + O_{P_1,X}(1)$, we get the desired bound
$$
-\log{\mathrm{dist}(\boldsymbol{\zeta},X)} \leq \varepsilon \phi(N)
$$
for $\boldsymbol{\zeta} \in \mu_N^d$ apart from an $O_{P_1,\varepsilon}(1)$ number of exceptions (depending on $N$) and a finite union  of proper torsion cosets $\boldsymbol{\mu} T \subsetneq \mathbb{G}_m^d$ depending on $P_1$ and $\varepsilon$ but independent of $N$. We then apply the induction hypothesis on $d$ to each of these finitely many lower dimensional tori $T$ and the subvarieties $\boldsymbol{\mu}^{-1} X \cap T$ inside them.

Suppose now that the hypersurface $\{ P_1 = 0\} \subsetneq \mathbb{G}_m^d$ has an irreducible component $V$ that maps onto a hypersurface $U \subsetneq \mathbb{G}_m^{d-1}$ under a surjective homomorphism $p: \mathbb{G}_m^d \twoheadrightarrow \mathbb{G}_m^{d-1}$. The induction hypothesis on $\sum_{i=1}^r \deg{P_i}$ takes care of the points $\boldsymbol{\zeta}$ lying close to the algebraic set $X \setminus V$. But $V \subset X \setminus X^{\circ}$ as our assumption implies that $V$ is covered by a union of translates of one dimensional algebraic subgroups of $\mathbb{G}_m^d$, namely the translates by points in $p^{-1}U$ of the kernel of the homomorphism $p$. This completes the double induction and the proof of Theorem~\ref{subvar}. \proofend

\section{Proof of Theorem~\ref{convergence}}  \label{hlawka}

We appeal to the classical Koksma-Hlawka inequality, a standard reference for which is chapter~5 of Kuipers and Niederreiter's monograph~\cite{uniform}. The function $f_T(\mathbf{z}) := \max(-T,\log{|P(\mathbf{z})|})$ on the torus $(S^1)^d$ has Hardy-Krause total variation $O_{P}(T)$ as $T \to +\infty$, and the set (sequence) $\mathbb{G}_m^d[N] = \mu_N^d$ has discrepancy $O_d(1/N)$. The complement of any subset of $U \subset \mu_N^d$ of cardinality  $|U| = O_P(N^{d-1})$ still has discrepancy $O_P(1/N)$. As $\int_0^{e^{-T}} \log{t} \, dt =  (1-T)e^{-T}$ (integrability of the log singularity), the integrals of $f_T$ and $\log{|P|}$ over the torus differ by less than $1/T$ for $T \gg_P 1$ and so, by the Koksma-Hlawka inequality, all $T > 1$ and $\Gamma$ and $U$ as above satisfy
\begin{eqnarray*} \label{bulk}
\Big| \frac{1}{N^{d}} \sum_{\boldsymbol{\zeta} \in  \mathbb{G}_m^d[N] \setminus U}  \max(-T,\log{|P(\boldsymbol{\zeta})|})  - \int_{(S_1)^d} \log{|P|} \, d\boldsymbol{\theta}  \Big|
\\ \ll_P T/N + 1/T,
\end{eqnarray*}
where the implied constant depends only on the polynomial $P$.

We may clearly assume that $P$ is irreducible in $\Z[\mathbf{x}]$ and, upon decreasing $d$, that it satisfies the condition of Theorem~\ref{main}: the zero locus $\{P = 0\}$ maps onto under all surjective homomorphisms $\mathbb{G}_m^d \twoheadrightarrow \mathbb{G}_m^{d-1}$. Choose $T := \varepsilon N$ and $U$ the set of $\boldsymbol{\zeta} \in \mu_N^d$ with $\log{|P(\boldsymbol{\zeta})|} \leq -\varepsilon N$. Then the right hand side of the Koksma-Hlawka bound approaches zero as first $\varepsilon \to 0$ and then $N \to \infty$. We have to show two points under this asymptotics:
\begin{itemize}
\item[(1)] $|U| \ll_{d,P} N^{d-1}$ (uniformly in $\varepsilon$); and
\item[(2)] $\sum_{\substack{ \boldsymbol{\zeta} \in U \\ P(\boldsymbol{\zeta}) \neq 0 } } -\log{|P(\boldsymbol{\zeta})|} \ll_P \varepsilon N^d$.
\end{itemize}

In fact (1) even holds for the number of $\mu_N^d$-points in the $1/N$ tubular neighborhood of $\{P = 0\} \subset (S^1)^d$. For, the volume of the $\sqrt{d}/N$ tubular neighborhood is $O_{d,P}(1/N)$, and that tubular neighborhood contains the disjoint union of the $1/N \times \cdots \times 1/N$ hypercubes having for Southwest corner a $\mathbb{G}_m^d[N]$-point in the $1/N$-tubular neighborhood, using the identification $(S^1)^d = (\R/\Z)^d$. Similarly, as the exceptional set $Z$ in Theorem~\ref{main} is a union of finitely many lower dimensional torsion cosets independent of $N$, we have $|U \cap Z| \ll_{d,P,\varepsilon} N^{d-2}$.

Now (2) also
follows immediately for $d \geq 2$  from Theorem~\ref{main}, applying~(\ref{subliouville}) to each of the $\ll N^{d-1}$ points in $U$ lying in neither $Z$ nor the exceptional set of $O_{P,\varepsilon}(1)$ elements,
 and the trivial Liouville $\ll_P N$ bound at each of the remaining $\ll_{P,\varepsilon} 1 + N^{d-2}$ points.  This completes the proof of Theorem~\ref{convergence} in the $d \geq 2$ case, whereas the $d = 1$ case follows from Gelfond's theorem, see Lind~\cite{lindqh}.

 \smallskip

  The same conclusion in the $d \geq 2$ case can just as  well be obtained from Habegger's Theorem~\ref{hab}, with $\epsilon := 1/2$. For the argument just given, we only need to know that
   $X^{\circ} = \emptyset$ implies that the hypersurface $\{P = 0\}$ is degenerate, in the sense of having an irreducible component that does not map onto under a surjective homomorphism $\mathbb{G}_m^d \twoheadrightarrow \mathbb{G}_m^{d-1}$. This follows from Proposition~2.5 in Chambert-Loir~\cite{chambertloir}, for the open anomalous set  $X^{\mathrm{oa}}$ of \emph{loc.cit.} is contained in $X^{\circ}$.
   \proofend

\section{The integral homology of abelian congruence covers}

\subsection{Homology of congruence covers} Sarnak's proof~\cite{sarnakadams} of the ``Lang $\mathbb{G}_m$'' conjecture, which is essentially the special case of Theorem~\ref{main} for torsion points lying exactly on a subvariety, arose in the context of studying the behavior of Betti numbers as a function of the level $N$ in congruence Galois covers $M_N \to M$ of a fixed finite simplicial complex $M$, corresponding to the kernel of reduction modulo $N$ under a representation $\rho : \pi_1(M) \to H(\Z)$ of the fundamental group of $M$ in the group of $\mathbb{Z}$-points of a linear algebraic group $H$  that possesses the strong approximation property. When $H$ is either unipotent or semisimple, Adams and Sarnak~\cite{sarnakadams} prove that the Betti numbers $b_i(M_N) = \dim_{\Q} H_i(M_N,\Q)$, $i = 0,1,\ldots$, are piecewise polynomial functions of the level $N$. This may be viewed as a vast generalization of  the classical  polynomial formula $\binom{N-1}{2}$ for the genus of the Fermat curve $\{X^N + Y^N = Z^n\} \subset \proj^2$, and is related to the fact, itself an immediate consequence of the Laurent-Sarnak structural theorem, that the number of $N$-torsion points lying in a fixed algebraic subset of $\mathbb{G}_m^d$ is a piecewise polynomial function in $N$.

One would like to understand the asymptotic behavior of the full integral homology $H_i(M_N,\Z)$ of the congruence cover $M_N$, which is the direct sum of a free $\Z$-module of rank $b_i(M_N)$ (piecewise polynomial in $N$) and a finite abelian group $H_i(M_N,\Z)_{\mathrm{tors}}$. It remains to understand the asymptotics of the torsion subgroup. When non-periodic, the torsion group $H_i(M_N,\Z)_{\mathrm{tors}}$ seems always to have an exponential (positive) exact growth rate in the appropriate power of~$N$.  In particular, similarly to a theorem of L\"uck~\cite{luckconvergence} answering a question of Kazhdan and Gromov for the Betti numbers, and under the assumption that $M$ is acyclic,  the finite layer Reidemeister torsions
 for the congruence kernels of the representation $\rho$ are expected to converge to the $L^2$-torsion of $(M,\rho)$. We refer the reader to L\"uck's book~\cite{luck} for the definition and a detailed study of the $L^2$-versions of these and other classical invariants. Bergeron and Venkatesh~\cite{bergeronvenkatesh} have a precise conjecture for the case that $H$ is semisimple, and obtain results in the lower bound direction under a spectral gap condition, for strongly acyclic local systems.

\subsection{The abelian case: $H(\Z) = \Z^d$}

In this section, we combine Theorem~\ref{main} and work of Thang Le~\cite{le} to treat the case that $H$ is the vector group $\mathbb{G}_a^d$: the case of the congruence abelian covers. The formula for the Betti numbers in that case, given as a count of $N$-torsion points lying in a certain ``explicit'' algebraic subset of $\mathbb{G}_m^d$, is summarized in section~7.2 of Bergeron and Venkatesh's paper~\cite{bergeronvenkatesh}.  We may now complement this with:

\begin{thm} \label{abelian}
Let $M$ be a finite simplicial complex and $R: H_1(M,\Z) \twoheadrightarrow \Z^d$ a surjection.  Consider $M_N$ the $(\Z/N)^d$ Galois cover of $M$ corresoponding to the kernel of reduction modulo $N$ of $R$. Then, for each $i = 0,1,\ldots$, the limit
$$
\lim_{N \to \infty} N^{-d} \log{|H_i(M_N,\Z)|_{\mathrm{tors}}}
$$
exists and equals the (additive) Mahler measure $m(P)$ of  a certain non-zero integer Laurent polynomial $P \in \Z[x_1^{\pm 1}, \ldots, x_d^{\pm 1}] \setminus \{0\}$ in $d$ commuting variables.
\end{thm}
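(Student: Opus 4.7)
The plan is to reduce the theorem to the Mahler-measure convergence (B), established unconditionally as Theorem~\ref{convergence}, by invoking Thang Le's analysis~\cite{le} of torsion homology growth in abelian covers.

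First I set up the algebraic framework. The universal abelian cover $\widetilde{M} \to M$ corresponding to $R: \pi_1(M) \twoheadrightarrow \Z^d$ carries a free $\Z^d$-action, and its cellular chain complex $C_*(\widetilde{M})$ is a bounded complex of finitely generated free modules over the group ring $\Z[\Z^d] \cong \Z[x_1^{\pm 1}, \ldots, x_d^{\pm 1}]$. The chain complex of the finite cover is
$$
C_*(M_N) = C_*(\widetilde{M}) \otimes_{\Z[\Z^d]} \Z[(\Z/N)^d].
$$
The characters $\chi: (\Z/N)^d \to \C^*$ correspond bijectively to the torsion points $\boldsymbol{\zeta} \in \mu_N^d$, and specializing the boundary maps at each $\boldsymbol{\zeta}$ decomposes $C_*(M_N) \otimes_{\Z} \C$ as a direct sum of twisted complexes. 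Elementary divisor theory over the cyclotomic ring $\Z[\mu_N]$ then expresses $|H_i(M_N,\Z)_{\mathrm{tors}}|$ as the product of the absolute values of a family of cyclotomic integers parametrized by orbits $\boldsymbol{\zeta} \in \mu_N^d / G_N$, modulo a bounded correction from the free part.

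The heart of the matter is Le's theorem~\cite{le}, which makes the previous accounting effective: for each $i$, there exists a non-zero Laurent polynomial $P = P_i \in \Z[x_1^{\pm 1}, \ldots, x_d^{\pm 1}]$, extracted from an appropriate Alexander-Fitting invariant of the $\Z[\Z^d]$-module structure on $H_i(\widetilde{M}, \Z)$, such that
$$
\log |H_i(M_N, \Z)_{\mathrm{tors}}| \; = \sum_{\substack{\boldsymbol{\zeta} \in \mu_N^d \\ P(\boldsymbol{\zeta}) \neq 0}} \log |P(\boldsymbol{\zeta})| \; + \; o(N^d), \qquad N \to \infty.
$$
Le established this reduction conditional precisely on the convergence statement (B) for averages of $\log |P(\boldsymbol{\zeta})|$ over $\boldsymbol{\zeta} \in \mu_N^d$. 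Since Theorem~\ref{convergence} supplies this hypothesis unconditionally, dividing by $N^d$ and passing to the limit yields
$$
\lim_{N \to \infty} \frac{1}{N^d} \log |H_i(M_N, \Z)_{\mathrm{tors}}| \; = \; m(P),
$$
as claimed.

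The main obstacle, already absorbed by Le on the topological side and by our Theorem~\ref{main} on the arithmetic side, is the control of the torsion characters $\boldsymbol{\zeta}$ at which $P$ vanishes or becomes very small. The exact vanishing locus is confined to finitely many proper torsion cosets by the Laurent-Sarnak structure theorem, and contributes only $O(N^{d-1})$ many characters, each subject to the trivial Liouville bound; near vanishing is subsumed in (B) itself. This is the same control used in Section~\ref{hlawka} to deduce Theorem~\ref{convergence} from Theorem~\ref{main}. Thus all the substantive input lies in Le's algebraic-topological reduction and in our Diophantine Theorem~\ref{main}, while the combination here is essentially formal.
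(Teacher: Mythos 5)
Your proposal follows exactly the route the paper takes: cite Thang Le's reduction (Theorem~5 of~\cite{le}), which expresses the torsion growth in terms of averages of $\log|P|$ over $\mu_N^d$ for the first non-zero Alexander polynomial $P$ of $H_i(\widetilde{M}(R),\Z)$ as an $R_d$-module, and then feed in Theorem~\ref{convergence} to supply the missing convergence hypothesis. The paper's own proof is precisely this two-sentence citation, so your expansion of the underlying chain-complex and elementary-divisor mechanics is a faithful (if more detailed) account of the same argument.
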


\begin{proof}
Follows immediately upon combining Theorem~\ref{convergence} with the proof of Theorem~5 of Thang Le~\cite{le}.
The polynomial $P$ is the first non-zero Alexander polynomial of $H_i(\widetilde{M}(R),\Z)$ as a module over the Laurent series ring $\Z[\Z^d] \cong R_d$ (if this module is non-zero), where $\widetilde{M}(R) \to M$ is the (infinite) abelian $\Z^d$-cover corresponding to the kernel of $R$. For the (standard) definition of the sequence $(\Delta_i)_{i \geq 0}$, $\Delta_{i+1} \mid \Delta_i$ of Alexander polynomials of a Noetherian $R_d$-module, we refer to section~1.2 of Le's paper~\cite{le}.
\end{proof}

\subsection{Link exteriors} The most interesting and classical case concerns the exterior of a link of $d$ components in $S^3$, or more generally in any integral homology $3$-sphere, and  its abelian $(\Z/N)^d$-covers as well as the corresponding branched coverings at the link. (The latter are compact $3$-manifolds, typically hyperbolic.) This is the context in which the Alexander polynomial was originally discovered, as the first historic example of a polynomial invariant of a knot  in $3$-space.

In the most basic case that $M$ is homeomorphic to a knot complement in $S^3$,  the formulas for the Betti numbers and the cardinal number of the first integral homology groups of the abelian cyclic covers of $M$ are classical results of, respectively, Goeritz and Fox;  see section~2.11 of Gordon~\cite{gordon}. However, in these results, the cardinal of an infinite group was interpreted to be zero. It took the work of Silver and Williams~\cite{silverwilliams} to isolate the torsion number in the case of a positive Betti number, which they did  by linking the problem to the connected components count in an algebraic dynamical system.

Silver and Williams used diagram  colorings to construct the system, and worked in the more general context of links and algebraic $\Z^d$-actions. They could then conclude (\cite{silverwilliams}, Theorem~2.1) that the \emph{upper} exponential growth rate of the size of the torsion $H_1(M_{\Gamma},\Z)_{\mathrm{tors}}$ in the first homology of the $\Z^d/\Gamma$-cover of $S^3$ branched at the link $l$ equals the Mahler measure of the first Alexander polynomial $\Delta_0$ of the link, as $\langle \Gamma \rangle \to \infty$, \emph{provided} $\Delta_0 \neq 0$. Instead, Thang Le's work~\cite{le} applied a comparison result to reduce to the case of a torsion $R_d$-module, and thereby succeeded to remove the non-vanishing $\Delta_0 \neq 0$.  As a consequence of Theorem~\ref{abelian} and the proofs in section~4 of Le's paper, we obtain a refinement of these results to the existence of the limit (Conjecture~6.1 of Silver and Williams~\cite{silverwilliams}) across all congruence abelian coverings.

\begin{corol} \label{links}
Let $l$ be an oriented link with $d$ components in an integral homology $3$-sphere $S$. We have $H_1(S \setminus l, \Z) \cong \Z^d$; let $S_N$ be the $(\Z/N)^d$ abelian cover of $S \setminus l$ with group $(\Z/N)^d$, and $S_N^{\mathrm{br}}$ (a compact $3$-manifold) the corresponding branched cover of $S$ at the link $l$.

Let $\Delta$ be the first non-zero Alexander polynomial of the link $l$, and $m(\Delta)$ its (additive) Mahler measure. Then,
$$
\lim_{N \to \infty} N^{-d} |H_1(S_N,\Z)|_{\mathrm{tors}} = \lim_{N \to \infty} N^{-d} |H_1(S_N^{\mathrm{br}},\Z)|_{\mathrm{tors}} = m(\Delta).
$$
The convergence is effective.
\end{corol}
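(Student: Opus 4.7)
The plan is to deduce both limits from Theorem~\ref{abelian} combined with the Mayer--Vietoris comparison of branched versus unbranched covers carried out in section~4 of Le~\cite{le}.

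\emph{The unbranched case.} I would take $M$ to be the link exterior $S \setminus \nu(l)$ (an open tubular neighbourhood of $l$ removed), which is a compact $3$-manifold with $d$-toral boundary and hence a finite simplicial complex. Since $S$ is an integral homology $3$-sphere, Alexander duality yields $H_1(M,\Z) \cong \Z^d$ with basis the meridians, and I would let $R$ be this isomorphism. Then the cover $M_N$ of Theorem~\ref{abelian} coincides with (a deformation retract of) $S_N$. By the classical definition of link Alexander invariants through Fox calculus, the first non-zero Alexander polynomial of the $R_d$-module $H_1(\widetilde{M}(R),\Z)$ coincides with $\Delta$ up to a unit of $R_d$ and up to factors of the form $t_i-1$, all of which have Mahler measure zero. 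Applying Theorem~\ref{abelian} with $i=1$ then gives the first equality.

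\emph{The branched case.} The branched cover $S_N^{\mathrm{br}}$ is obtained from the metric closure $\bar S_N$ of $S_N$ by Dehn-filling each preimage of a meridian, and each of the $d$ peripheral tori of $M$ has at most $N^{d-1}$ preimage components in $\bar S_N$. Consequently the natural surjection $H_1(\bar S_N,\Z) \twoheadrightarrow H_1(S_N^{\mathrm{br}},\Z)$ has kernel generated by at most $dN^{d-1}$ meridional classes. The Mayer--Vietoris / excision analysis of \cite[\S 4]{le} sharpens this crude information into the estimate
$$
\bigl| \log |H_1(S_N,\Z)|_{\mathrm{tors}} - \log |H_1(S_N^{\mathrm{br}},\Z)|_{\mathrm{tors}} \bigr| = o(N^d),
$$
from which the second equality follows at once. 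Effectiveness of the convergence propagates from the quantitative form of Theorem~\ref{main} through Theorem~\ref{convergence} and Theorem~\ref{abelian}, together with explicit error terms in the peripheral comparison.

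\emph{Main obstacle.} The subtle step is the $o(N^d)$ torsion comparison between $H_1(S_N,\Z)$ and $H_1(S_N^{\mathrm{br}},\Z)$. Torsion orders can jump dramatically under quotients even by a modest number of elements, so the naive bound obtained by counting the $dN^{d-1}$ meridional Dehn-filling relations is not enough; one must invoke Le's precise control over the Reidemeister-torsion contribution of the peripheral tori, and verify that this contribution is genuinely subexponential in $N^d$. The identification of the $R_d$-Alexander polynomial of $H_1(\widetilde M,\Z)$ with the classical link polynomial $\Delta$ is essentially formal through Fox calculus, but I would spell out the reduction in order to confirm that the Mahler measures agree after discarding the $t_i-1$ factors.
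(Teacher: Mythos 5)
Your proposal is correct and follows exactly the paper's intended route: the paper derives Corollary~\ref{links} by combining Theorem~\ref{abelian} (applied to the link exterior with $R$ the abelianization) with the branched-versus-unbranched comparison from section~4 of Le~\cite{le}, which is precisely what you do. You also correctly identify the genuine subtlety (the $o(N^d)$ torsion comparison under the peripheral Dehn fillings, for which the naive relation count does not suffice and Le's Mayer--Vietoris/torsion analysis is required) and the harmless $t_i-1$ discrepancy between the Fox-calculus and topological Alexander modules.
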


The counterpart for the Betti numbers is Corollary~1.4 of Sarnak-Adams~\cite{sarnakadams}. Their paper also contains an example showing that the polynomials giving the Betti numbers may have a positive degree, unlike for the case of knots ($d = 1$).

\section{Complements, generalizations and related problems}  \label{complements}

\subsection{Subsets of complex embeddings}  \label{refo} With Liouville's general inequality, the trivial bound extends to any subset of the places: uniformly over all $\Sigma \subset \mathrm{Gal}(\Q(\mu_N)/\Q)$, it holds either
\begin{equation}  \label{liouville}
\sum_{\sigma \in \Sigma} -\log{|P(\boldsymbol{\zeta}^{\sigma})|} \ll_P \phi(N) \quad \textrm{ or } P(\boldsymbol{\zeta}) = 0.
\end{equation}
 Already with the polynomial $P(x) = x - 1$ this bound does not extend to an $o(\phi(N))$, because a positive proportion of the primitive $N$-th roots of unity belong to the $1/2$ neighborhood of $x = 1$. For $\Sigma$ constrained to satisfy $|\Sigma| = o(|G_N|)$ as $N \to \infty$, the hypothetical sub-Liouville $o(\phi(N))$ improvement over (\ref{liouville}) amounts to the question of whether  the Galois equidistribution of strict sequences of torsion points of $\mathbb{G}_m^d$ should extend to the singular test functions of the form $\log{|P|}$, where $P \in \Z[x_1^{\pm  1 },\ldots, x_d^{\pm 1}] \setminus \{0\}$. This would refine (B) and Silverman's general Conjecture~15 in~\cite{silverman}.

In this direction, the proof of Theorem~\ref{main} extends immediately to yield that for all $\Sigma \subset \mathrm{Gal}(\Q(\mu_N)/\Q)$, all but $O_{P,\varepsilon}(1)$ points $\boldsymbol{\zeta} \in \mu_N^d$ fulfil
\begin{equation}  \label{simultaneous}
\min_{\sigma \in \Sigma} -\log{|P(\boldsymbol{\zeta}^{\sigma})|} \leq \varepsilon \frac{\phi(N)}{|\Sigma|}.
\end{equation}

\subsection{No exceptional set}
In the case of $\Sigma = \{1\}$, the problem of no exceptional set reduces to the bound (\ref{strong}), and also to the statement that one may take $C(k,\varepsilon) = 0$ for $N \gg_{k,\varepsilon} 1$ in Corollary~\ref{corolsums}. This had been raised as an open ended question already in the 1980s, by Myerson~\cite{gerry}.  As in Feldman's refinement of Baker's theorem in the theory of logarithmic linear forms, Myerson asks (\ref{strong}) even in the stronger form of an $O_{P}(\log{N})$ bound. If true,  a simple packing argument shows this type of refinement of~(\ref{strong}) to be best possible; see, for example, Theorem~2 in Konyagin and Lev~\cite{konyaginlev}.  Habbeger's Theorem~\ref{hab} shows that such a bound does indeed hold for all but at most $\ll_{\epsilon} X^{\epsilon}$ of the primes $N < X$; see~\cite{habegger}.

\subsection{Myerson's conjecture}
  The refined convergence hypothesis (B) would  imply  Myerson's conjecture~\cite{gaussian}  about the asymptotic growth, as $q \to \infty$ for a fixed $t \mid q-1$, of the norm from $\Q(\mu_q)$ to $\Q$ of the `Gaussian period' $\eta := \sum_{u=0}^{t-1} \zeta_q^{us}$. For  $t \leq 3$ the asymptotic formula was proved by Myerson and refined by Duke~\cite{duke} to an apparently optimal error term.
 Myerson's conjecture is also discussed in chapter~10 of Konyagin and Shparlinski's book~\cite{konyaginchar}. Habegger~\cite{habnew} obtains a solution, with a refinement to an error term, of the case that $t$ is odd and $q$ is a prime.

\subsection{Small points} We have been concerned in this paper with Diophantine approximations by roots of unity.
 As is customary in the subject, one could ask if the hypothesis that the point $\boldsymbol{\zeta} \in \mathbb{G}_m^k(\bar{\Q})$ is torsion could be weakened to a hypothesis that $\boldsymbol{\zeta}$ is an algebraic point with sufficiently small canonical height. A straightforward modification of the proof, with Shou-wu Zhang's structural theorem on ``Bogomolov $\mathbb{G}_m$'' replacing the Laurent-Sarnak ``Lang $\mathbb{G}_m$'' theorem,  yields a unificaction of Theorem~\ref{main} and Zhang's theorem. We state here the non-uniform version, which follows immediately from the proof of Theorem~\ref{main} using Theorem~4.2.2 in Bombieri and Gubler's book~\cite{bg}, although it seems likely that $M$ and $\epsilon$ in the theorem may in fact be taken independent of the height of $P$, as soon as $[K:\Q]\ \gg_{d,\varepsilon,\deg{P},h(P)} 1$.

\begin{thm} \label{smallth}  Fix $P \in \Z[x_1,\ldots,x_d]$ and $\varepsilon > 0$, and assume that all components of $\{P = 0\}$ map onto under every surjective homomorphism $\mathbb{G}_m^d \twoheadrightarrow \mathbb{G}_m^{d-1}$. Then there are effectively computable functions
$$
M = M(d,\varepsilon,\deg{P},h(P)) < \infty, \quad \epsilon = \epsilon(d,\varepsilon,\deg{P},h(P)) > 0,
$$
and a finite union $Z \subsetneq \mathbb{G}_m^d$ of proper torsion cosets $\boldsymbol{\mu}T \subsetneq \mathbb{G}_m^d$,
such that the following is true:

For every finite extension $K/\Q$, and every subset $\Sigma \subset \mathrm{Aut}(K/\Q)$, all but at most $M$ points $\boldsymbol{\alpha} \in \mathbb{G}_m^d(K)$ of canonical height $ h(\boldsymbol{\alpha}) < \epsilon$ satisfy either
$$
  \min_{\sigma \in \Sigma} - \log{|P(\boldsymbol{\alpha}^{\sigma})|} \leq \varepsilon \frac{[K:\Q]}{|\Sigma|} \quad \textrm{or } \boldsymbol{\alpha} \in Z.
$$
\end{thm}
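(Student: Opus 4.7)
The strategy is to rerun the proof of Theorem~\ref{main}, substituting Zhang's Bogomolov theorem (in the form of Theorem~4.2.2 of~\cite{bg}) for the Laurent--Sarnak structural theorem at the vanishing step, and the product formula over $K$ for the integrality argument $N_{K/\Q}(F) \in \Z \setminus \{0\}$ at the non-vanishing step. I induct on $k \in \{0, 1, \ldots, d\}$ exactly as in section~\ref{construction}, with ``torsion point'' replaced by ``point of canonical height at most $\epsilon_k$''; the auxiliary polynomial $F$ from Proposition~\ref{poly} is purely algebraic and its construction is unaffected. The hypothesis $\min_{\sigma \in \Sigma} -\log|P(\boldsymbol{\alpha}^\sigma)| > \varepsilon [K:\Q]/|\Sigma|$ now plays the role of the defining inequality~(\ref{definin}).

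Given an $m$-tuple $(\boldsymbol{\alpha}_1, \ldots, \boldsymbol{\alpha}_m) \in S^m$ from the level-$k$ putative counterexample $S$, I split into two cases. If $F$ vanishes identically on $S^m$, then $S^m$ is a set of points of height $< \epsilon$ lying on the proper subvariety $\{F = 0\} \subsetneq \mathbb{G}_m^{dm}$; choosing $\epsilon$ below the effective Bogomolov threshold of $\{F = 0\}$ provided by Theorem~4.2.2 of~\cite{bg}, $S^m$ is contained in a finite union of proper torsion cosets of $\mathbb{G}_m^{dm}$, and the pigeonhole-plus-monoidal-substitution reduction of section~4 brings the set-up to the $k-1$ case of the induction verbatim. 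If instead $F(\boldsymbol{\alpha}_1, \ldots, \boldsymbol{\alpha}_m) \neq 0$, applying $\sigma \in \Sigma$ to the identity~(\ref{rhs}) yields
\[
-\log|F(\boldsymbol{\alpha}^\sigma)| \ \geq\ n\cdot\frac{\varepsilon [K:\Q]}{|\Sigma|}\ -\ O_{P,m,D}(1)\ -\ dmD\sum_{i,j}\log^+|\alpha_{i,j}^\sigma|,
\]
while the product formula combined with the standard height inequality $h(F(\boldsymbol{\alpha})) \leq dmD\cdot h(\boldsymbol{\alpha}) + h(F) + O(\log D)$ yields the complementary upper bound
\[
\sum_{\sigma \in \Sigma} \max\bigl(0,\,-\log|F(\boldsymbol{\alpha}^\sigma)|\bigr)\ \leq\ [K:\Q]\bigl(dmD\,\epsilon + h(F) + O(\log D)\bigr).
\]
Summing the lower bound over $\sigma \in \Sigma$ and invoking $\sum_{\sigma \in \mathrm{Aut}(K/\Q)} \log^+|\alpha_{i,j}^\sigma| \leq [K:\Q]\, h(\alpha_{i,j}) < [K:\Q]\,\epsilon$, together with the Siegel savings $h(F) \leq -m + O(\log D)$ from Proposition~\ref{poly}, one reaches a contradiction as soon as $m$ is fixed as in~(\ref{choicem}), $D$ as in~(\ref{choiced}), $\epsilon$ is chosen below a positive quantity computable in $\varepsilon$, $d$, $m$, $D$, and $[K:\Q]$ is large enough to absorb the constant $O_{P,m,D}(1)$.

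The main obstacle is the product-formula balancing in the non-vanishing case. In the torsion setting, $|\boldsymbol{\zeta}^\sigma| = 1$ eliminates the $\log^+|\alpha_{i,j}^\sigma|$ perturbation altogether and reduces the error to a single additive constant $O_{P,m,D}(1)$. In the small-points setting, this perturbation is of order $dmD \cdot [K:\Q]\,\epsilon$, so $\epsilon$ must be chosen below a constant multiple of $1/(dmD)$; because $m$ and $D$ grow with $\varepsilon$ and are determined through the Siegel construction by $\deg P$ and $h(P)$, this is precisely the route by which $\epsilon$ acquires its dependence on $h(P)$. Removing that dependence, as conjectured in the remark following the theorem, would require either a Siegel lemma uniform in $h(P)$ or an effective Bogomolov threshold independent of $h(F)$; neither is available in the present method, which is why we are content with the non-uniform statement.
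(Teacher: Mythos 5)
Your proposal is correct and takes exactly the approach the paper indicates: the paper devotes only a single sentence to Theorem~\ref{smallth}, stating that it "follows immediately from the proof of Theorem~\ref{main}" with Zhang's Bogomolov theorem (Theorem~4.2.2 of Bombieri--Gubler) replacing the Laurent--Sarnak theorem, and your sketch is a faithful and accurate expansion of that remark, correctly identifying the two places where the torsion hypothesis was used (controlling $\log^+|\boldsymbol{\alpha}^\sigma|$ at the Archimedean step and providing the structural covering at the vanishing step) and how each is to be replaced. Your closing discussion of the route by which $\epsilon$ acquires its $h(P)$-dependence matches the paper's subsequent remark about the non-uniform version.
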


Here, $h : \mathbb{G}_m^k(\bar{\Q}) \to \R^{\geq 0}$ is Weil's absolute canonical logarithmic height. The uniformity feature $M = M(\varepsilon,\deg{P})$ and $\epsilon = \epsilon(\varepsilon,\deg{P})$, as soon as $[K:\Q] \gg_{P,\varepsilon} 1$, is immediate from the proof in the $d = 1$ case. Very possibly the same feature persists for all $d$, on using Theorem~4.2.3 in Bombieri and Gubler~\cite{bg} instead of Theorem~4.2.2 of {\it loc.cit.}, but that would require additional work.

\subsection{An example} \label{examp} The following example shows that, unlike possibly for the roots of unity case, an exceptional set is definitely needed in the general Theorem~\ref{smallth}.
  It is taken from the paper~\cite{bakerihrumely} of Baker, Ih and Rumely, with a similar consideration appearing in Autissier~\cite{autissier}.

  Consider the equation $x^n(x-2) = 1$. It is usually irreducible over $\Q$. For example, with $x$ replaced by $x+1$, it is Eisenstein at $2$ if $n = 2^k-1$, showing that it certainly  is irreducible for arbitrarily large $n$; fix a sequence of such $n$ going to infinity. By Rouch\'e's theorem, there is one root $\alpha_n$ close to $2$, and then the equation shows
$$
|2 - \alpha_n| = |\alpha_n|^{-n} < (2-o(1))^{-\deg{\alpha_n}}.
$$
On the other hand, the equation has bounded length and hence certainly $h(\alpha_n) \to 0$ as $n \to \infty$; cf. Bombieri-Gubler~\cite{bg}, Prop.~1.6.6 and Lemma~1.6.7. (Alternatively, note that the remaining conjugates of $\alpha_n$ are close to the unit circle, again by Rouch\'e's theorem.) Thus, already with $d = 1, P(x) = x-2, \varepsilon = 0.5$ and a single place, an $M > 0$ (non-empty exceptional set) is required for infinitely many $K$ in Theorem~\ref{smallth}.

 We note also that, by a theorem of Mignotte~\cite{mignotte}, the statement of Theorem~\ref{smallth} does hold with empty exceptional set ($M = 0$) for the case $P(x) = x-1$, as $[K:\Q] \gg_{\varepsilon} 1$. Sub-Liouville bounds of this type form a small subject of its own, which has come to be known as \emph{algebraic points close to one}; see \cite{mignotte,amoroso,amorososurvey,onelog,bug,dub}. It would be interesting to know whether a non-empty exceptional set is possible in Theorem~\ref{smallth} under the additional assumption (shared by the roots of unity) that the extensions $\mathbb{Q}(\boldsymbol{\alpha}) / \Q$ are Galois.

\subsection{$p$-adic metrics} Theorem~\ref{smallth} and its proof extends also to the $p$-adic places, and a similar example to the above, using the equation $x^n(x-1/p) = 1$, shows that even for $\Sigma = \{1\}$, the $p$-adic version too is best possible in this extension to small algebraic points. This is in stark contrast with the case of $p$-adic roots of unity (the analog of the $\Sigma = \{1\}$ case of Theorem~\ref{main}), where Tate and Voloch~\cite{tatevoloch} have shown that a torsion point not lying in a subvariety of $\mathbb{G}_{m/\C_p}^d$ is $p$-adically bounded away from the subvariety. For general $\Sigma \subset G_N$, Tate and Voloch's theorem shows that the $p$-adic variant of Theorem~\ref{smallth} does hold with a best possible bound and an empty exceptional set for the case of $p$-adic roots of unity.

\subsection{Algebraic dynamics, preperiodic points, and convergence to the Mahler measure} \label{prep} A generalization of Theorem~\ref{smallth} could be expected in the framework of algebraic dynamics. Let $f_1,\ldots,f_d : \proj^1 \to \proj^1$ be rational maps over $\bar{\Q}$ of degrees $q_i > 1$, and consider $\widehat{h}_f: \proj^1(\bar{\Q}) \to \R^{\geq 0}$ the dynamical height function $\widehat{h}_f(P) := \lim_{n \to \infty} q^{-n} h(f^n(P))$ of Call and Silverman; it is non-negative and vanishes exactly on the preperiodic points. One could ask about extending Theorem~\ref{smallth} to Diophantine approximation by points $\boldsymbol{\alpha}  \in (\proj^1)^d(K)$ having a small enough dynamical height $\widehat{h}_f(\boldsymbol{\alpha}) := \sum_{i=1}^d \widehat{h}_{f_i}(\alpha_i) < \epsilon$, in particular, to Diophantine approximation by the preperiodic points of $(f_1,\ldots,f_d)$.

In this direction, Szpiro and Tucker~\cite{szpirotucker} have used Roth's theorem to prove that the dynamical Mahler measure $\int_{\proj^1(\C)} \log{|P|} \, d\mu_f$ of an integer univariate polynomial $P \in \Z[x]$ is approached, as $n \to \infty$, by the averages of $\log{|P|}$ over either of the sets $\{\alpha \mid f^n(\alpha) = \alpha\}$ and $\{\alpha \mid f^n(\alpha) = \beta \}$, for any given $\beta \in \proj^1(\bar{\Q})$ that is not one of $\leq 2$ exceptional point for the rational iteration $f$. Here, $\mu_f$ is the Brolin-Lyubich measure: the unique invariant probability measure of maximum entropy for $(\proj^1(\C),f)$. It could be interesting to explore a higher dimensional case of this problem.
Note that, by the example in~\ref{examp},  convergence to the Mahler measure can fail for a sequence of small points, already for $f(x) = x^2$ and the Weil canonical height.

\subsection{Arakelov theory} One might also expect a more general conceptual framework for the Diophantine result of this paper. Given any non-negative height function $h_L : X(\bar{\Q}) \to \R^{\geq 0}$ arising by Arakelov's intersection theory from a non-trivial nef semipositive  adelically metrized line bundle on a projective variety $X/\Q$, it could be asked whether for every $\varepsilon > 0$ and a subvariety $E \subset X$ satisfying the appropriate non-degeneracy condition, there are $\epsilon = \epsilon(X,L,E,\varepsilon) > 0, M = M(X,L,E,\varepsilon) < \infty$, and a proper closed algebraic subset
$$
Z = Z(X,L,E,\varepsilon) \subsetneq X,
$$
 such that, for every number field $K$, all but at most $M$ points $x \in X(K) \setminus Z$ having $h_L(x) < \epsilon$ are of distance at least $e^{-\varepsilon \, [K:\Q]}$ from $E(\C)$.

\subsection{CM points} Instead of torsion points, one could consider these problems for different sets of special points in special varieties. In this direction, Habegger~\cite{habeggersin} has extended the Tate-Voloch theorem to the case of CM points in a power of the modular curve that are ordinary at a fixed finite prime $p$. He notes that the Tate-Voloch statement fails for supersingular CM points; however, one could expect the analog of (\ref{strong}) to hold also  for the supersingular points, as well as for the Archimedean valuations. Indeed, it is the $p$-ordinary points that are similar to $p$-adic roots of unity, by means of the Serre-Tate theory of canonical liftings which Habegger exploits in his paper. In contrast, the $p$-supersingular points are not discrete, and should behave similarly to the complex roots of unity.

It could be interesting to establish a version of Theorem~\ref{smallth} for singular moduli. Results in this direction are again implied by Habegger's recent work~\cite{habegger}, at least in the Archimedean situation.

\subsection{Diophantine approximation by closed orbits} The $\Sigma = \{1\}$ case of Theorems~\ref{main} and~\ref{smallth} are mostly trivial within a fixed Galois orbit (except for the uniformity clauses), and so these results may be regarded as bounding the number of Galois orbits of torsion or small points that get very near to a fixed subvariety. Likewise, \ref{prep} outlines  a similar hypothesis on how closely may a subvariety be approached by the periodic trajectories of a rational iteration on a power of the projective line. One could also ask for the metric forms of these results, in a wider dynamical context. For instance, what are the analogs of Dirichlet's and Khintchin's theorems for Diophantine approximation by preperiodic points in the Julia set of a rational function? For a generic point $P$ on a complete finite area surface of negative curvature, how closely is $P$ approached by a closed geodesic or a closed horocycle, in terms of the length of the geodesic or the horocycle?

\subsection{Schmidt's Subspace theorem} Finally, the strong parallel with the proofs of the theorems of Roth and Schmidt lead us to ask whether our Theorem~\ref{smallth} here could be merged into a common generalization with Schmidt's Subspace theorem or some of its noteworthy consequences. For example, in view of Laurent's theorem~\cite{laurent} on Diophantine approximation from a given finitely generated subgroup $\Gamma \subset \mathbb{G}_m^k(\bar{\Q})$, we could inquire about the existence of an $\epsilon = \epsilon(P,\Gamma,\varepsilon) > 0$ an $M(P,\Gamma,\varepsilon) < \infty$, and a finite union $Z = Z(P,\Gamma,\varepsilon) \subsetneq \mathbb{G}_m^d$ of proper torus cosets, such that, for all number fields $K$, the inequality
$$
|P(\boldsymbol{\alpha})| < \exp(- \varepsilon \, \big(1+h(\boldsymbol{\alpha}))[K:\Q] \big), \quad \boldsymbol{\alpha} \notin Z
$$
has at most $M(P,\Gamma,\varepsilon)$ $K$-rational solutions $\boldsymbol{\alpha} \in \mathcal{C}(\Gamma,\epsilon) \cap \mathbb{G}_m^d(K)$ from Habegger's ``truncated cone'' (from~\cite{boundedheight})
$$
\mathcal{C}(\Gamma,\epsilon) := \{ xy \in \mathbb{G}_m^k(\bar{\Q}) \mid x \in \Gamma, \, y \in \mathbb{G}_m^k(\bar{\Q}), \, h(y) \leq \epsilon(1+h(x)) \}.
$$
around $\Gamma$. This is in the spirit of Poonen's ``Mordell-Lang plus Bogomolov''~\cite{poonen}, now in the Diophantine approximation context of our paper.

\section{Logarithmic  forms}  \label{baker}

In this section we comment on the shortcomings of logarithmic linear forms theory towards verifying (B) over all product subgroups $\mu_{a_1} \times \cdots \times \mu_{a_d}$.
 The best general estimates for logarithmic linear forms are obtained from Baker's method, with refinements from the theory of zero multiplicity estimates on commutative group varieties. Baker and W\"ustholz's 1993 theorem~\cite{bakerwust} remains the state of the art in this subject, crowning a long sequence of step by step improvements over Baker's original 1966 result. An account of the Baker-W\"ustholz theorem is exposed in the short monograph~\cite{bakergold} by the same authors, which is now the authoritative reference for logarithmic linear forms theory. An improvement of the constants in the rational case, particularly regarding the exponential dependence in the number of logarithms, was further achieved by Matveev~\cite{matveevlf}.

One could naively hope that Theorems~\ref{hab} and~\ref{main} might combine with logarithmic linear forms theory to solve all $\mu_{a_1} \times \cdots \times \mu_{a_d}$ cases of (B), with the former covering the case that the group $\mu_{a_1} \times \cdots \times \mu_{a_d}$ is not almost cyclic and the latter covering the case that the group $\mu_{a_1} \times \cdots \times \mu_{a_d}$ is almost cyclic. This turns out to be not possible, in particular
the state of the art results in logarithmic linear forms theory are not nearly as strong as we need in the aspect of primary relevance to our problem, which is precisely the degree aspect of the number field generated by the arguments of the logarithmic forms.

 What concerns us from logarithmic linear forms theory is a sub-Liouville upper bound on a quantity of the form $-\log{|\alpha^b - 1|}$ (equivalently, on the rational linear form $\mathrm{dist}(b \log{\alpha}, 2\pi i \Z)$ in two logarithms $\log{\alpha}$ and $\log{1}$), for $b \in \N$ and for certain non-torsion algebraic points $\alpha \in \mathbb{G}_m(\bar{\Q})$ of degree $D$ having a bounded absolute (logarithmic) height $h(\alpha) \ll 1$. In this particular case, the Baker-W\"ustholz theorem amounts to a bound
\begin{equation} \label{genlog}
-\log{|\alpha^b - 1|} \ll D^4 \log{D} \cdot h'(\alpha) \log{b},
\end{equation}
provided the left-hand side is finite,
 where $h'(\alpha) := \max(h(\alpha),1/D)$, and the  implied constant is absolute (and small). The logarithmic dependence on $b$ is best possible as far as it goes, but the dependence on $D$ leaves a lot to be desired. Liouville's trivial bound is
 \begin{equation} \label{liou}
-\log{|\alpha^b - 1|} \leq Db \cdot h(\alpha) +  D\log{2} \ll_{1+ h(\alpha)} Db,
 \end{equation}
 and~(\ref{genlog}) is not a sub-Liouville bound unless $b > D^3$. While~(\ref{genlog}) does solve (B) in the case, for instance, of  the finite groups of the form $\mu_{a_1} \times \cdots \times \mu_{a_d}$ having  $a_d > (a_1 \cdots a_{d-1})^{7}$, the results of logarithmic forms theory do not appear capable of reaching Theorem~\ref{convergence} proved in this paper.

     The fact is that, in the preset stage of the subject, the $D^4$ power stands as a crucial barrier for any of Gelfond's, Schneider's or Baker's methods to succeed, with any $o(b)$ at all. As Matveev notes in~\cite{matveevlf} (in the introduction of the second paper), a similar $D^4$ dependence had arisen already Gelfond's work, as typified by Theorem~3.4.III of his 1952 book~\cite{gelfond}. A bound like~(\ref{genlog}) was first reached by Waldschmmidt~\cite{wald} in 1980, and since unimproved in the $D$ aspect. Mignotte and Waldschmidt~\cite{mignottewaldschmidt} have shown that Schneider's method leads to a $\ll D^4 h'(\alpha) (\log{b})^2$ version of~(\ref{genlog}), which Laurent~\cite{laurentint} then refined to a rather small numerical constant, using an interpolation determinant.

 One could conceivably\footnote{If one is optimistic enough to believe in Lehmer's conjecture.}  expect the bound~(\ref{genlog}) to refine to  an $O(D \cdot h(\alpha) \cdot \log{eb})$, for $\alpha \in \mathbb{G}_m(\bar{\Q})$ non-torsion. Clearly, such an optimistic statement would be best possible jointly in $D, h(\alpha)$ and~$b$. Moreover, in view of Dirichlet's Approximation theorem,  even the $b \gg_{\alpha} 1$ asymptotic form of this hypothetical best possible bound would force Salem numbers to be bounded away from $1$. This indicates that a refinement like this is hopelessly out of reach. It could nonetheless be worth remarking that a completely uniform statement of this strength would combine with Habegger's Theorem~\ref{hab} to solve (B) in all $\mu_{a_1} \times \cdots \times \mu_{a_d}$ cases, for arbitrary $d$.
Of course, even then many cases of finite subgroups would remain.

\section{Diophantine/Dynamical pairs}  \label{appo}

We conclude the paper with an overview of mathematically equivalent pairs of Diophantine and dynamical problems.

\subsection{}
   The brilliant example of this has been Margulis's solution of the longstanding Oppenheim conjecture in the arithmetic theory of real indefinite quadratic forms. This began with a recasting of the problem by Raghunathan in terms of orbit closures in homogeneous dynamics (an $\mathrm{SO}(2,1)$-orbit in $\mathrm{SL}(3,\R) / \mathrm{SL}(3,\Z)$ is either closed or dense), to which Margulis could apply a well developed ergodic arsenal of unipotent flows. Margulis's proof found its proper setting with Ratner's ensuing theorems on measure rigidity, algebraicity of orbit closures and equidistribution in the homogeneous dynamics of flows generated by ad-unipotent one-parameter subgroups of a Lie group (real or $p$-adic). These fundamental developments on the ergodic side have since found diverse applications back to number theory, including, for two notable instances, to Mazur's anticyclotomic  conjecture in elliptic curves Iwasawa theory (Vatsal~\cite{vatsal}), and the non-Poisson distribution of   gaps in the sequence $\sqrt{n} \mod{1}$ (Elkies-McMullen~\cite{gaps}).

 While Ratner's theorems  exclude the case of diagonalizable (torus) flows --- indeed, they patently fail for geodesic flows, --- such (\emph{hyperbolic}) flows and actions are of a still more basic Diophantine character, beginning with E. Artin's symbolic coding of modular geodesics via the continued fraction expansion. Furthermore, like in Furstenberg's $\times 2, \times 3$ problem, higher rank hyperbolic homogeneous dynamics is believed to regain a similar rigidity to the unipotent case. For the case of the Weyl chamber flow on $\mathrm{SL}(3,\R) / \mathrm{SL}(3,\Z)$, the orbit closure Margulis conjecture amounts precisely to another old problem of Diophantine approximations: Cassels and Swinnerton-Dyer's refinement~\cite{casselssd} of Littlewood's conjecture that $\inf_{n} n \| n\alpha \| \| n\beta \| = 0$ for all $\alpha, \beta \in \R$. This problem has remained unsolved from both its Diophantine and ergodic sides, even in the basic case $(\alpha,\beta) = (\sqrt{2},\sqrt{3})$. Cassels and Swinnerton-Dyer gave in their paper~\cite{casselssd} a Diophantine solution of the case that $\Q(\alpha,\beta)$ is a cubic extension of $\Q$, while much more recently Einsiedler, Katok and Lindenstrauss~\cite{ekl} developed an entropy method by which they could establish Littlewood's conjecture for all $(\alpha,\beta) \in \R^2$ outside of a set of Hausdorff dimension zero.

 \subsection{} With homogeneous flows, the applications so far have been from dynamics  to number theory: one finds a dynamical formulation of a Diophantine problem, which one seeks to approach  by ergodic theory methods.  (There are also a few cases where alternative proofs are available from either the ergodic or Diophantine sides.) The interaction, however, is genuinely organic, and does not limit to using ergodic theory as a tool for number theory. We leave aside here  the  ``intrinsic joins'' such as the Diophantine character of KAM theory or the currently developing subject of Arithmetic Dynamics, and focus purely  on pairs  of an {\it \`a priori} dynamical and an {\it \`a priori} Diophantine problems that turn out to be mathematically equivalent. Besides homogeneous dynamics, another setting for this is supplied by the robust class of higher rank systems studied in  Klaus Schmidt's book~\cite{schmidt}: the ones said to be of \emph{algebraic origin}, namely, the $\Z^d$-actions by automorphisms of a compact group. This is the context that our paper addresses.

  Here, the implications thus far discovered tend to go with Weyl's direction, from number theory to dynamics; and here too, the Diophantine questions were formulated and studied, for their intrinsic interest,  long before their  dynamical significance  could be found. The celebrated Lehmer problem, on the spectral gap in the Mahler measure (see Bombieri~\cite{bombieridist} for a perspective in Diophantine Geometry), was shown by Lind to be equivalent to either of the following dynamical questions: \cite{lind} does the infinite torus $(\R/\Z)^{\Z}$ admit an ergodic automorphism with finite entropy? \cite{lindskew} are all Bernoulli shifts {\it algebraizable}, in the form of a measurable equivalence with an automorphism of a compact group? Lind, Schmidt and Ward's paper~\cite{lindschmidtward} places this relation precisely in the general context of $\Z^d$-actions by automorphisms of a compact group, asking whether such dynamical systems are general or negligible from the measurable point of view. Lehmer's problem equates to an alternative for the set of entropies of such systems: either it is countable, or it is the full continuum $[0,\infty]$.

 \subsection{} These questions are, it seems, completely open ended.  A better understood  indication of the depth of the interaction, which regards strictly the higher rank case, is the equivalence of  W.M. Schmidt, A.J. Van der Poorten and H.P. Schlickewei's  theorem of the finiteness of non-degenerate solutions to the equation $x_1 + \cdots + x_r = 1$ in a finitely generated group $\Gamma \subset \C^{\times}$ (see Evertse and Gy\"ory's book~\cite{unit} for a comprehensive overview of this subject), and  K. Schmidt and T. Ward's theorem~\cite{schmidtward} about higher order mixing, see also chapter VIII of Schmidt's~\cite{schmidt} or chapter~8.2 of Einsiedler and Ward's books~\cite{einsidlerward}: a mixing $\Z^d$-action by automorphisms of a compact \emph{connected} abelian group  is mixing of all orders. The only known proof of this pair of equivalent results comes from number theory (via Weyl's criterion), with the Thue-Siegel-Roth-Schmidt method that we too  employ in this paper. A dynamical approach to higher order mixing, should it exist in a form amenable to a completely quantitative estimate, could be expected to shed a  light on a notorious Diophantine problem: to give an effective solution to the multivariate $S$-unit equation.

\subsection{} The Diophantine/dynamical pair (A) and (B) reflects the growth and distribution of periodic points of algebraic $\Z^d$-actions. Theorem~\ref{convergence} addresses this in the averaged sense of the cubical lattices $N \cdot \Z^d$. Refining this to arbitrary  $\Z^d \supset \Gamma \supset N \cdot \Z^d$ and, furthermore, to long orbits in $\mathrm{Per}_N(T)$, still eludes a proof, and Theorem~\ref{convergence} can be seen as an averaged form of growth rate and equidistribution.

Such ``averaged'' growth and equidistribution properties are, as is well known, typical of dynamical systems that exhibit at least a partial hyperbolicity. They hold for all weakly topologically mixing flows or transformations that satisfy Smale's Axiom~A. We refer to Margulis~\cite{margulis}, Bowen~\cite{bowen} and Parry and Pollicott~\cite{axioma,pol,parrypollicott} for a detailed study of the growth and equidistribution of the periodic trajectories of such flows. This subject was also inspired by number theory, but this time mainly on the level of analogy and methodology, the inspiration coming from Selberg's trace formula {\it vis \`a vis} the equidistribution of prime ideals in algebraic number fields $K$ over ray class fields or as points of the space of lattices in $K \otimes \R$.  Very recently, inspired by Sullivan's dictionary between Kleinian groups and the rational maps of complex dynamics, H. Oh and D. Winter~\cite{ohwinter} obtained a completely new case of hyperbolic equidistribution lying outside of the framework of Axiom A systems. Let us also mention that,  in arithmetically relevant cases, a refined equidistribution is sometimes available in which only the orbits of a fixed length are considered. Duke's theorem~\cite{duke} is a fine example of this,  powered by   a lower bound on the total length $h \log{\varepsilon}$ of the union of $h$ equal length closed geodesics corresponding to a given discriminant, that amounts to Siegel's ineffective theorem $L(1,\chi_q) \gg_{\epsilon} q^{-\epsilon}$ in prime number theory. In our application, equidistribution emerges analogously from a lower bound in terms of $N$ on the size of $\mathrm{Per}_N(T)$. The mechanism for this inference is in Ward's paper~\cite{wardexp} already quoted in the introduction, by means of commutative algebra and harmonic analysis on the group~$X$.

\subsection{} The results  of this paper appear to unify naturally into a common generalization with W. M. Schmidt's Subspace theorem. All these Diophantine results are semi-effective, regarding only the number of solutions to a Diophantine inequality. Section~\ref{complements} indicates some elements of such a unification, in addition to a few other extensions concerning points with sufficiently small canonical height. Our motivation in doing this is an  attempt to unravel the Diophantine nature of~(\ref{strong}), as well as the difficulties inherent in the quest for a higher dimensional  Baker theorem --- a problem whose central importance to number theory is stressed at the end of section~1.2 of Waldschmidt's book~\cite{waldschmidt}. In dynamics, we have seen  the two sides of the unification in, respectively, higher order mixing and the distribution of periodic points.

\subsection{} Besides the refinement to subgroups and long orbits in $\mathrm{Per}_N(T)$, an obvious  problem meriting a further study would be to extend the results on the distribution of periodic trajectories to algebraic actions or flows of other amenable groups, such as $\R^d$.
More  interesting but completely open ended is the question of reversing the direction of the application, by having a general dynamical paradigm brought to bear on the  unsolved problems in the Diophantine unification around A. Baker's and W. M. Schmidt's theorems.  This concerns higher dimensions in the former and effectivity in the latter, and would be especially rewarding with regard to mixing and effectivity. Halmos and Rokhlin's general question, of whether mixing yields an automatic higher order mixing for measure preserving transformations ($\Z$-actions), has turned out to reflect a surprisingly general phenomenon while  remaining one of the oldest unsolved problems of classical ergodic theory. The version  for the weak mixing property was obtained by Furstenberg over his ergodic proof of Szemer\'edi's theorem, see~\cite{furstenbergsz}, Th.~3.1. It could be also interesting to pursue the extension of Furstenberg's theorem to higher rank amenable group actions, and then the implications on the $S$-unit equation.

   An apparently fatal obstruction to a dynamical solution of the $S$-unit equation is the existence of counterexamples to automatic higher order strong mixing in the higher rank case. See Ledrappier~\cite{ledrappier} for the example that, according to Schmidt~\cite{schmidt}, has historically played a pivot role in the development of the whole subject of algebraic $\Z^d$-actions. But a closer examination of Ledrappier's system actually bolsters the case for viewing this classical number theory problem as embedded in a completely general context of mixing in ergodic theory.  Ledrappier's  example, $ \mathfrak{M} = R_2 / (2,1+x_1+x_2)$,  of a mixing algebraic $\Z^2$-action where a higher order mixing breaks down, has, along with its generalizations, been now understood to reflect a theorem in Diophantine Geometry over global function fields. Taking account of the effects of Frobenius in positive characteristic, similar structural theorems are available over function fields, leading in turn to corresponding structural theorems about mixing shapes: see Masser~\cite{masser}, Derksen and Masser~\cite{mixing}, as well as section VIII.28 of Schmidt~\cite{schmidt} and section~7.6 of Evertse and Gy\"ory~\cite{unit}. The combined Diophantine methods then answer completely the Halmos-Rokhlin problem for algebraic $\Z^d$-actions on any compact abelian group.

    As a matter of fact,  zero dimensional groups $X$ (the case in Ledrappier's system) are understood much more satisfactorily in this regard. Replacing Schmidt's Subspace theorem,  Julie Wang's effective truncated Second Main theorem~\cite{wang}, for projective space and a \emph{constant} divisor over a function field of any characteristic,  combines with Derksen and Masser's results to solve the higher order mixing problem effectively. Contrastingly, the Diophantine prototypes over  number fields have remained ineffective and much more mysterious. It remains to be seen whether a deepening of the dynamical connection would eventually be brought to bear, in this direction as well, on such outstanding questions of number theory.


\begin{thebibliography}{99}

\bibitem{akaeinsiedler} Aka M., Einsiedler M.: Duke's theorem for subcollections, {\it Ergod. Th. $\&$ Dyn. Syst.}, vol. {\bf 36}, no.~2 (2016), pp.~335--342.

\bibitem{alievsmyth} Aliev I., Smyth C.: Solving algebraic equations in roots of unity, {\it Forum Math.}, vol. {\bf 24}, no.~3 (2012), pp.~641--665.

\bibitem{alon} Alon N.: Combinatorial Nullstellensatz, {\it Combin. Prob. Comput.}, vol. {\bf 8} (1999), pp.~7--29.

\bibitem{amoroso} Amoroso F.: Algebraic numbers close to $1$ and variants of Mahler's measure, {\it Journal of Number Theory}, vol. {\bf 60} (1996), pp. 80--96.

\bibitem{amorososurvey} Amoroso F.: Algebraic numbers close to $1$:  results and methods,  {\it Contemp. Math.}, vol. {\bf 210} (1998), pp. 305--316.



\bibitem{autissier} Autissier P.: Sur une question d'\'equir\'epartition de nombres alg\'ebriques. C. R. Math. Acad. Sci. Paris, vol. {\bf 342}, no. 9. 639--641 (2006)

\bibitem{ax} Ax J.: On Schanuel's conjectures, {\it Ann. Math.}, vol. {\bf 93}, no.~2 (1971), pp.~252--268.

\bibitem{exotic} Baier S., Jaidee S., Stevens S., Ward T.: Automorphisms with exotic orbit growth, {\it Acta Arithmetica}, vol. {\bf 158}, no.~2 (2013), pp.~173--197.

\bibitem{bakerwust} Baker A., W\"ustholz G.: Logarithmic forms and group varieties, {\it J. reine angew. Math.}, vol. {\bf 442} (1993), pp.~19--62.


\bibitem{bakergold} Baker A., W\"ustholz G.: Logarithmic Forms and Diophantine Geometry, {\it Cambridge New Mathematical Monographs}, vol. {\bf 9} (2007), 198 pp.

\bibitem{bakerihrumely} Baker M., Ih S., Rumely R.: A finiteness property of torsion points. Algebra and Number Theory, vol. {\bf 2}, no. 2, 217--248 (2008)



\bibitem{bergeronvenkatesh} Bergeron N., Venkatesh A.: The asymptotic growth of torsion homology for arithmetic groups, {\it J. Inst. Math. Jussieu}, vol. {\bf 12} (2013), pp.~391--447.

\bibitem{bombierival} Bombieri E.: Algebraic values of meromorphic maps, {\it Invent. Math.}, vol. {\bf 10} (1970), pp.~267--287.


\bibitem{bombierivaaler} Bombieri E., J. Vaaler: Polynomials with low height and prescribed vanishing, in {\it Analytic Number Theory and Diophantine Problems}, Prog. Math. {\bf 70} (1987).





\bibitem{bombieridist} Bombieri E.: Problems and results on the distribution of algebraic points on algebraic varieties, {\it J. Th. Nombres Bordeaux}, vol. {\bf 21} (2009), pp. 41--57.




\bibitem{bg} Bombieri E., Gubler W.: Heights in Diophantine Geometry, {\it Cambridge New Mathematical Monographs}, vol. {\bf 4} (2006), 652 pp.

\bibitem{bombierizannier} Bombieri E., Zannier U.: Algebraic points on subvarieties of $\mathbb{G}_{m}^{n}$, {\it Int. math. Res. Notices}, no.~7 (1995), pp.~333--347.

\bibitem{bommaszan} Bombieri E., Masser D.W., Zannier U.: Anomalous subvarieties --- structure theorems and applications, {\it Int. Math. Res. Notices} (2007), no.~19, pp.~1--33.


\bibitem{bourgain} Bourgain J.: Exponential sum estimates on subgroups of $\mathbf{Z_q}^*$, $q$ arbitrary, {\it J. Analyse Math.}, vol. {\bf 97} (2005), pp.~317--356.

\bibitem{bgk} Bourgain J., Glibichuk A.A., Konyagin S.V.: Estimates for the number of sums and products and for exponential sums in fields of prime order, {\it J. London Math. Soc.}, vol. {\bf 73} (2006), pp.~380--398.


\bibitem{bowen} Bowen R.: The equidistribution of closed geodesics, {\it Amer. J. Math.}, vol. {\bf 94}, no.~2 (1972), pp.~413--422.



\bibitem{bug} Bugeaud Y.: Algebraic numbers close to $1$ in non-archimedean metrics, {\it Ramanujan Journal}, vol. {\bf 2} (1998), pp. 449--457.



\bibitem{casselssd} Cassels J., Swinnerton-Dyer P.: On the product of three homogeneous linear forms and indefinite ternary quadratic forms, {\it Phil. Trans. Royal Soc. London}, Ser. A, vol. {\bf 248} (1955), pp.~73--96.

\bibitem{chambertloir} Chambert-Loir A.: Relations de d\'ependance et intersections exceptionnelles, {\it S\'em. Bourbaki}, no.~1032 (2010--2011), 37 pp.

\bibitem{davidphilippon} David S., Philippon P.: Minorations des hauteurs normalis\'ees des sous-vari\'et\'es des tores, {\it Ann. Sc. Norm. Sup. Pisa}, $4^{\textrm{\'e}}$-S\'er, tome {\bf 28}, no.~3 (1999), pp.~489--453.

\bibitem{mixing} Derksen H., Masser D.W.: Linear equations over multiplicative groups, recurrences, and mixing I., {\it Proc. London Math. Soc.}, vol. {\bf 104} (2012), pp.~1045--1083.

\bibitem{vesselin} Dimitrov V.: A lower bound on the orbit growth of a regular self-map of affine space, preprint (2013), \texttt{arXiv:1311.4133}.

    \bibitem{vesselinadd} Dimitrov V.: Silverman's conjecture for additive polynomial mappings, (\emph{Submitted}.) Available as \texttt{ArXiv:1511.04061v1} (2015).


  \bibitem{dub} Dubickas A.: On algebraic numbers close to $1$, {\it Bull. Austral. Math. Soc.}, vol. {\bf 58} (1998), pp. 423--434.


\bibitem{dukehyp} Duke W.: Hyperbolic distribution problems and half-integral weight Maass forms, {\it Invent. Math.}, vol. {\bf 92} (1988), pp.~73--90.

\bibitem{duke} Duke W.: A combinatorial problem related to Mahler's measure, {\it Bull. London Math. Soc.}, vol. {\bf 39} (2007), pp.~741--748.

\bibitem{einsidlerward} Einsiedler M., Ward T.: Ergodic Theory, with a view towards Number Theory, {it Graduate Texts in Math.}, vol. {\bf 259} (2011).

\bibitem{ekl} Einsiedler M., Katok A., Lindenstrauss E.: Invariant measures and the set of exceptions to Littlewood's conjecture, {\it Ann. Math. (2)}, vol. {\bf 164}, no.~2 (2006),  pp.~513--560.



\bibitem{individual} Einsiedler M., Lindenstrauss E., Michel P., Venkatesh A.: Distribution of periodic torus orbits on homogeneous spaces, {\it Duke Math. J.}, vol. {\bf 148}, no.~1 (2009), pp.~119--174.



\bibitem{gaps} Elkies N., McMullen C.T.: Gaps in $\sqrt{n} \mod{1}$ and ergodic theory, {\it Duke Math. J.}, vol. {\bf 123} (2004), pp.~95--139. Corrigendum: {\it Correction to ``Gaps in $\sqrt{n} \mod{1}$ and ergodic theory''}, ibid., vol. {\bf 129}, no.~2 (2005), pp.~404--405.

\bibitem{everestward} Everest G., Ward T.: Heights of Polynomials and Entropy in Algebraic Dynamics, {\it Universitext}, 212 pp. (1999).

\bibitem{unit} Evertse J.-H., Gy\"ory K.: Unit equations in Diophantine Geometry, {\it Cambridge Monographs in Advanced Math.}, vol.~{\bf 146} (2015), 363 pp.

\bibitem{furstenberg} Furstenberg H.: Disjointness in ergodic theory, minimal sets, and a problem in Diophantine Approximation, {\it Math. Systems Theory}, vol. {\bf 1} (1967), pp.~1--49.

\bibitem{furstenbergsz} Furstenberg H., Katznelson Y., Ornstein D.: The ergodic theoretical proof of Szemer\'edi's theorem, {\it Bull. Amer. Math. Soc.}, vol. {\bf 7}, no. 3 (1982), pp.~527--552.

\bibitem{gelfond} Gelfond A.O.: Transcendental and Algebraic Numbers (translated from the Russian by Leo F. Boron), Dover Publications (1960; 1988; 2015), 189 pp.


\bibitem{gordon} Gordon C.: Knots whose branched cyclic coverings have periodic homology, {\it Trans. Amer. Math. Soc.}, vol. {\bf 168} (1972).

\bibitem{gorodnikspatzier} Gorodnik A., Spatzier R.: Mixing properties of commuting nilmanifold automorphisms, {\it Acta Math.}, vol. {\bf 215} (2015), pp.~127--159.

\bibitem{boundedheight} Habegger P.: On the bounded height conjecture, {\it Int. Math. Res. Notices} (2009), no.~5, pp.~860--886.

\bibitem{habeggersin} Habegger P.: The Tate-Voloch conjecture in a power of the modular curve, {\it Int. Math. Res. Notices} (2014), no.~12, pp.~3303--3339.

\bibitem{habegger} Habegger P.: Diophantine approximations on definable sets, Preprint (2016). \texttt{ArXiV:1608.04547v1}.

\bibitem{habnew} Habegger P.: The norm of Gaussian periods, Preprint (2016). \texttt{ArXiv:1611.07287v1}.


\bibitem{ominimal} Jones G.O., Wilkie A.J. (eds.): O-Minimality and Diophantine Geometry, {\it London Math. Soc. Lecture Note Series}, vol. {\bf 421}, 221 pp. (2015).

\bibitem{kawsil1} Kawaguchi S., Silverman J.H.: On the dynamical degree and the arithmetic degree of rational self-maps of algebraic varieties, {\it J. reine angew. Math.}, to appear. Available at: \texttt{ArXiv:1208.0815}.

\bibitem{kawsil2} Kawaguchi S., Silverman J.H.: Dynamical canonical heights, Jordan blocks, arithmetic degrees of orbits, and nef canonical heights on abelian varieties, {\it Trans. AMS}, to appear. Available at: \texttt{ArXiv:1301.4964}.

\bibitem{thakur} Kim M., Thakur D., Voloch J.F.: Diophantine approximation and deformation, {\it Bull. soc. math. France}, vol. {\bf 128} (2000), pp.~585--598.

\bibitem{kitchensschmidt} Kitchens B., Schmidt K.: Automorphisms of compact groups, {\it Ergod. Th. \& Dynam. Sys.}, vol. {\bf 9} (1989), pp.~691--735.

\bibitem{konyaginlev} Konyagin S.V., Lev V.F.: On the distribution of exponential sums, {\it Integers} (2000), A1, 11 pp.

\bibitem{konyaginchar} Konyagin S., Shparlinski I.: Character Sums with Exponential Functions and Their Applications, {\it Cambridge Tracts in Math.}, vol. {\bf 136} (2004).


\bibitem{uniform} Kuipers L., Niederreiter H.: Uniform Distribution of Sequences, {\it Wiley} (1974), 390 pp.

\bibitem{laurent} Laurent M.: \'Equations diophantiennes exponentielles, {\it Invent. Math.}, vol. {\bf 78}, no. 2 (1984), pp. 299--327.

    \bibitem{laurentint} Laurent M.: Linear forms in two logarithms and interpolation determinants, {\it Acta Arithmetica}, vol. LXVI.2 (1994), pp.~181--199.

\bibitem{le} Le T.: Homology torsion growth and Mahler measure, {\it Comm. Math. Helvetici}, vol. {\bf 89} (2014), pp.~719--756.

\bibitem{ledrappier} Ledrappier F.: Un champ markovien peut \^etre d'entropie nulle et m\'elangeant, {\it C. R. Acad. Sc. Paris Ser. A}, vol. {\bf 287} (1978), pp.~561--562.

\bibitem{lind} Lind D.: Ergodic automorphisms of the infinite torus are Bernoulli, {\it Israel J. Math.}, vol. {\bf 17} (1974), pp. 162--168.

\bibitem{lindskew} Lind D.: The structure of skew products with ergodic group automorphisms, {\it Israel J. Math.}, vol. {\bf 28} (1977), pp. 205--248.

\bibitem{lindqh} Lind D.: Dynamical properties of quasihyperbolic toral automorphisms, {\it Ergod. Th. $\&$ Dynam. Sys.}, vol. {\bf 2} (1982), pp.~49--68.

\bibitem{lindschmidtward} Lind D., Schmidt K., Ward T.: Mahler measure and entropy for commuting automorphisms of compact groups, {\it Invent. Math.}, vol. {\bf 101} (1990), pp.~493--629.

\bibitem{previous} Lind D., Schmidt K., Verbitsiky E.: Entropy and growth rate of periodic points of algebraic $\Z^d$-actions, in: Dynamical Systems: Interplay Between Dynamical Systems and Number Theory (ed. S. Kolyaga, Yu. Manin, M. M\"oller, P. Moree and T. Ward), {\it Contemp. Math.}, AMS (2010).

\bibitem{atoral} Lind D., Schmidt K., Verbitskiy E.: Homoclinic points, atoral polynomials, and periodic points of algebraic $\Z^d$-actions, {\it Ergod. Th. \& Dynam. Sys.}, vol. {\bf 33} (2013), pp.~1060--1081.

\bibitem{luck} L\"uck W.: $L^2$-Invariant: Theory and Applications to Geometry and $K$-Theory, {\it Ergebnisse Math.}, vol. {\bf 44} (2002), 595 pp.

\bibitem{luckconvergence} L\"uck W.: Approximating $L^2$-invariants by their finite-dimensional analogues, {\it Geom. Funct. Analysis}, vol. {\bf 4}, no.~4 (1994), pp.~455--481.

\bibitem{margulis} Margulis G.A.: On Some Aspects of the Theory of Anosov Systems (with a survey by Richard Sharp: Periodic Orbits of Hyperbolic Flows), {\it Springer Monographs in Math.} (2004), 138 pp.

\bibitem{masser} Masser D.W.: Mixing and linear equations over groups in positive characteristic, {\it Israel J. Math.}, vol. {\bf 142} (2004), pp.~189--204.


\bibitem{matveevlf} Matveev E.M.: An explicit lower bound for a homogeneous rational linear form in logarithms of algebraic numbers I, II, {\it Izvestia: Math.}, vol. {\bf 62}, no.~4 (1998), pp.~723--772; ibid. vol. {\bf 64}, no.~6 (2000), pp.~125--180.


\bibitem{ctm} McMullen C.T.: Hyperbolic manifolds, discrete groups and ergodic theory. Course Notes (last updated: June 2015). Available online at math.harvard.edu/$\sim$ctm.


\bibitem{mignotte} Mignotte M.: Approximations des nombres alg\'ebriques par des nombres alg\'ebriques de grand degr\'e, {\it Ann. Fac. Sci. Toulouse}, {\it $5^e$ s\'erie}, tome 1, no. 2 (1979), pp. 165--170.

\bibitem{mignottewaldschmidt} Mignotte M., Waldschmidt M.: Linear forms in two logarithms and Schneider's method I, II, III, {\it Math. Ann.}, tome {\bf 231} (1978), pp.~241--267; {
    \it Acta Arithmetica}, vol. LIII (1989), pp.~251--287; {\it Ann. fac. sci.  Tolouse $5^{\textrm{\'e}}$ s\'er.}, tome {\bf S10} (1989), pp.~43--75.

\bibitem{onelog} Mignotte M., Waldschmidt M.: On algebraic numbers of small height: linear forms in one logarithm, {\it J. Number Th.}, vol. {\bf 47} (1994), pp.~43--62.

\bibitem{gaussian} Myerson G.: A combinatorial problem in finite fields, II., {\it Quart. J. Math. Oxford}, vol. {\bf 32}, no.~2 (1980), pp.~219--231.

\bibitem{gerry} Myerson G.: Unsolved problems: How small can a sum of roots of unity be?, {\it Amer. Math. Monthly}, vol. {\bf 93} (1986), no.~6, pp.~457--459.

\bibitem{ohwinter} Oh H., Winter D.: Effective prime number theorems and holonomies for hyperbolic rational maps,  {\it Invent. Math.}, to appear.

\bibitem{axioma} Parry W., Pollicott M.: An analogue of the prime number theorem for closed orbits of Axiom A flows, {\it Ann. Math.}, vol. {\bf 118} (1983), pp.~573--591.

\bibitem{pol} Parry W., Pollicott M.: The Chebotarov theorem for Galois coverings of Axiom A flows, {\it Ergod. Th. $\&$ Dynam. Sys.}, vol. {\bf 6} (1986), pp.~133--148.

\bibitem{parrypollicott} Parry W., Pollicott M.: Zeta functions and the periodic orbit structure of hyperbolic dynamics, {\it Ast\'erisque}, vol. {\bf 187-188} (1990).

\bibitem{poonen} Poonen B.: Mordell-Lang plus Bogomolov, {\it Invent. Math.}, vol. {\bf 137}, no.~2 (1999), pp.~413--425.


\bibitem{raimbault} Raimbault J.: Exponential growth of torsion in abelian coverings, {\it Alg. $\&$ Geom. Topology}, vol. {\bf 12} (2012), pp.~1331--1372.


\bibitem{sarnakadams} Sarnak P., Adams S.: Betti numbers of congruence groups (with an appendix by Zeev Rudnick), {\it Israel J. Math.}, vol. {\bf 88} (1994), pp.~31--72.

    \bibitem{scanlon} Scanlon T.: Counting special points: Logic, Diophantine geometry, and transcendence theory, {\it Bull. AMS}, vol. {\bf 49}, no.~1 (2012), pp.~51--71.

\bibitem{schmidt} Schmidt K.: Dynamical Systems of Algebraic Origin, {\it Progress in Math.}, vol. {\bf 128} (1995).

\bibitem{schmidtward} Schmidt K., Ward T.: Mixing automorphisms of compact groups and a theorem of Schlickewei, {\it Invent. Math.}, vol. {\bf 111} (1993), pp.~69--76.

\bibitem{silverwilliams} Silver D.S., Williams S.G.:  Mahler measure, links and homology growth, {\it Topology}, vol. {\bf 41} (2002), pp.~979--991.

\bibitem{twistedalexander} Silver D.S., Williams S.G.: Dynamics of twisted Alexander invariants, {\it Topology and its Applications}, vol. {\bf 156}  (2009), pp.~2795--2811.

\bibitem{domrat} Silverman J.H.: Dynamical degree, algebraic entropy, and canonical heights for dominant rational self-maps of projective space, {\it Ergodic Theory and Dyn. Systems}, vol. {\bf 34} (2014), pp.~633--664.

\bibitem{silvab} Silverman J.H.: Arithmetic and dynamical degrees on abelian varieties, {\it J. Th. Nombres Bordeaux}, to appear. Available at: \texttt{ArXiv:1501.04205}.

\bibitem{silverman} Silverman J.H.: Divisor divisibility sequences on tori, {\it Acta Arithmetica}, to appear. Available at: \texttt{ArXiv:1511.09038v1}.

\bibitem{szpirotucker} Szpiro L., Tucker T.J.: Equidistribution and generalized Mahler measures, {\it in:} Number Theory, Analysis and Geometry: In Memory of Serge Lang (eds.~D. Goldfeld et. al.) (Springer, 2012).

\bibitem{tatevoloch} Tate J., Voloch J.F.: Linear forms in $p$-adic roots of unity. Int. Math. Res. Notices, no. 12, 589--601 (1996)

\bibitem{vatsal} Vatsal V.: Uniform distribution of Heegner points, {\it Invent. Math.}, vol. {\bf 148} (2002), pp.~1--46.

\bibitem{wald} Waldschmidt M.: A lower bound for linear forms in logarithms, {\it Acta Arithmetica}, vol.~XXXVII (1980), pp.~257--283.

\bibitem{waldschmidt}  Waldschmidt M.: Diophantine Approximation on Linear Algebraic Groups: Transcendence Properties of the Exponential Function in Several Variables, {\it Grund. math. Wissenschaften}, vol. {\bf 326}, Springer (2000), 633 pp.

\bibitem{wang} Wang J. T.-Y.: The truncated Second Main Theorem for function fields, {\it J. Number Th.}, vol. {\bf 58} (1996), pp.~139--157.


\bibitem{ward} Ward T.: Topological entropy and periodic points for $\Z^d$ actions on compact abelian groups with the Descending Chain Condition, {\it PhD Thesis}, University of Warwick (1989).

\bibitem{wardexp} Ward T.: Periodic points for expansive actions of $\Z^d$ on compact abelian groups, {\it Bull. London Math. Soc.}, vol. {\bf 24}, no.~4 (1992), pp.~317--324.

\bibitem{weyl} Weyl H.: Sur une application de la th\'eorie des nombres \`a la m\'ecanique statistique et la th\'eorie des perturabations, {\it Ens. Math.}, vol. {\bf 16} (1914), pp.~455--467.

\bibitem{zannier} Zannier U.: Some Problems of Unlikely Intersections in Arithmetic and Geometry (with Appendixes by David Masser), {\it Ann. Math. Studies}, vol.~{\bf 181}, Princeton University Press (2012).


\end{thebibliography}
\end{document}